\documentclass[12pt]{amsart}

\author{Paul \textsc{Poncet}}
\address{CMAP, \'{E}cole Polytechnique, Route de Saclay, 91128 Palaiseau Cedex, France \\
and INRIA, Saclay--\^{I}le-de-France}

\email{poncet@cmap.polytechnique.fr}

\usepackage{stmaryrd}
\usepackage{amssymb}
\usepackage{amsmath}
\usepackage{t1enc}
\usepackage{yhmath}
\usepackage{calrsfs} 
\usepackage{times} 

\pagestyle{plain}

\newcommand{\reels}{\mathbb R}
\newcommand{\ddint}{\int^{\scriptscriptstyle\infty}\!\!}
\newcommand{\dint}[1]{\int^{\scriptscriptstyle\infty}_{#1}\!\!}

\newtheorem{theorem}{Theorem}[section]
\newtheorem{corollary}[theorem]{Corollary}
\newtheorem{proposition}[theorem]{Proposition}
\newtheorem{lemma}[theorem]{Lemma}

\theoremstyle{definition}
\newtheorem{definition}[theorem]{Definition}

\newenvironment{acknowledgements}[1][]{\par\vspace{0.5cm}\noindent\textbf{Acknowledgements#1.} }{\par}

\begin{document}

\title{The idempotent Radon--Nikodym theorem \\ has a converse statement}

\date{\today}

\subjclass[2010]{Primary 28B15; 
                 Secondary 03E72, 
                           49J52} 

\keywords{idempotent integration; Shilkret integral; Sugeno integral; Radon--Nikodym theorem; maxitive measures; $\sigma$-principal measures; localizable measures; pseudo-multiplications}

\begin{abstract}
Idempotent integration is an analogue of the Lebesgue integration where $\sigma$-additive measures are replaced by $\sigma$-maxitive measures. 
It has proved useful in many areas of mathematics such as fuzzy set theory, optimization, idempotent analysis, large deviation theory, or extreme value theory.
Existence of Radon--Nikodym derivatives, which turns out to be crucial in all of these applications, was proved by Sugeno and Murofushi. Here we show a converse statement to this idempotent version of the Radon--Nikodym theorem, i.e.\ we characterize the $\sigma$-maxitive measures that have the Radon--Nikodym property. 
\end{abstract}

\maketitle

\section{Introduction}

Maxitive measures, originally introduced by Shilkret \cite{Shilkret71}, are defined analogously to classical finitely additive measures or charges with the supremum operation, denoted $\oplus$, in place of the addition $+$. 
More precisely, 
a \textit{maxitive measure} on a $\sigma$-algebra $\mathrsfs{B}$ is a map $\nu : \mathrsfs{B} \rightarrow \overline{\reels}_+$ such that $\nu(\emptyset) = 0$ and 
$$ 
\nu(B_1 \cup B_2) = \nu(B_1) \oplus \nu(B_2), 
$$
for all $B_1, B_2 \in \mathrsfs{B}$. 
It is \textit{$\sigma$-maxitive} if it commutes with unions of nondecreasing sequences of elements of $\mathrsfs{B}$. One should note that a $\sigma$-maxitive measure does not necessarily commute with \textit{intersections} of nonincreasing sequences, unlike $\sigma$-additive measures. 

A corresponding ``maxitive'' integral, paralleling Lebesgue's integration theory, was built by Shilkret. 
It was rediscovered independently and generalized by Sugeno and Murofushi \cite{Sugeno87} and by Maslov \cite{Maslov87}. 
Since then, this integral has been studied and used 
by several authors with motivations from dimension theory and fractal geometry, optimization, capacities and large deviations of random processes, fuzzy sets and possibility theory, idempotent analysis and max-plus (tropical) algebra. 

Because of the numerous fields of application just listed, the wording around maxitive measures is not unique. 
For instance, Maslov coined the term \textit{idempotent integration}, which is also of wide use. 
Notations may also diverge; we adopt the choice of Gerritse \cite{Gerritse96} and write 
$
\dint{B} f \cdot d\nu 
$ 
for the Shilkret integral of a measurable map $f$ with respect to the maxitive measure $\nu$ on a measurable set $B$. 
The index $\infty$ is not an integration bound, it recalls the fact that the Shilkret integral can be seen as a limit of a sequence of Choquet integrals. 

More generally, we shall consider the \textit{idempotent $\odot$-integral} 
$$
\dint{B} f \odot d\nu, 
$$ 
where $\odot$ is a \textit{pseudo-multiplication}, i.e.\ a binary relation satisfying a series of natural properties. If $\odot$ is the usual multiplication (resp.\ the minimum $\wedge$), then the idempotent $\odot$-integral specializes to the Shilkret (resp.\ Sugeno) integral.  

In all of the fields of application listed above, a Radon--Nikodym like theorem is often essential. For instance, a comprehensive theory of possibilities (where a \textit{possibility measure} is the maxitive analogue of a probability measure) cannot do without a notion of \textit{conditional} possibility (just like one needs that of conditional expected value in probability theory). Its existence happens to be ensured by that of Radon--Nikodym derivatives (or \textit{densities}).  
Such a theorem is actually available: it was proved in \cite{Sugeno87} by Sugeno and Murofushi. 
These authors showed that, if $\nu$ and $\tau$ are $\sigma$-maxitive measures on a $\sigma$-algebra $\mathrsfs{B}$, with $\tau$ $\sigma$-$\odot$-finite and $\sigma$-principal,  then $\nu$ is $\odot$-absolutely continuous with respect to $\tau$ if and only if there exists some $\mathrsfs{B}$-measurable map $c : E \rightarrow \overline{\reels}_+$ such that 
$$
\nu(B) = \dint{B} c \odot d\tau, 
$$
for all $B\in \mathrsfs{B}$. 

Given that $\sigma$-$\odot$-finiteness and $\odot$-absolute continuity generalize the usual concepts of $\sigma$-finiteness and absolute continuity to the setting of the pseudo-multiplication $\odot$, the assertion looks like the classical Radon--Nikodym theorem, except that one needs an unusual condition on the dominating measure $\tau$, namely \textit{$\sigma$-principality}.  
This condition roughly says that every $\sigma$-ideal of $\mathrsfs{B}$ has a  greatest element ``modulo negligible sets''. Although $\sigma$-finite  $\sigma$-additive measures are always $\sigma$-principal, this is not true for $\sigma$-finite $\sigma$-maxitive measures. For instance, every $\sigma$-maxitive measure $\nu$ is $\odot$-absolutely continuous with respect to the $\sigma$-maxitive measure $\delta_{\#}$, defined on the same $\sigma$-algebra $\mathrsfs{B}$ by $\delta_{\#}(B) = 1$ if $B$ is nonempty and $\delta_{\#}(\emptyset) = 0$; however, $\nu$ does not always have a density with respect to $\delta_{\#}$. 

After the article \cite{Sugeno87}, many authors have published results of Radon--Nikodym flavour for maxitive measures. This is the case of Agbeko \cite{Agbeko95}, Akian \cite{Akian99}, Barron et al.\ \cite{Barron00}, and Drewnowski \cite{Drewnowski09}. In some cases, the authors were not aware of the existence of idempotent integration theory. In \cite{Poncet11}, we explained why these results are already encompassed in the Sugeno--Murofushi theorem. 

The purpose of this paper is to prove a converse to the Sugeno--Murofushi theorem. A $\sigma$-maxitive measure $\tau$ has the \textit{Radon--Nikodym property} if every $\sigma$-maxitive measure $\odot$-dominated by $\tau$ has a density with respect to $\tau$. Put together with the Sugeno--Murofushi theorem, our main result is the following:

\begin{theorem}
Given a non-degenerate pseudo-multiplication $\odot$, a $\sigma$-max\-itive measure satisfies the Radon--Nikodym property with respect to the idempotent $\odot$-integral if and only if it is $\sigma$-$\odot$-finite and $\sigma$-principal. 
\end{theorem}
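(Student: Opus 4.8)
The plan is to prove the two implications separately, the forward one being all but immediate. If a $\sigma$-maxitive measure $\tau$ is $\sigma$-$\odot$-finite and $\sigma$-principal, then any $\sigma$-maxitive measure $\nu$ that is $\odot$-dominated by $\tau$ is, in the terminology of the introduction, $\odot$-absolutely continuous with respect to $\tau$, so the Sugeno--Murofushi theorem at once provides a density of $\nu$ with respect to $\tau$; hence $\tau$ has the Radon--Nikodym property. All the content is therefore in the converse, so from now on assume that $\tau$ satisfies the Radon--Nikodym property; I would show that $\tau$ is $\sigma$-principal and that it is $\sigma$-$\odot$-finite, arguing each by contraposition and in both cases testing the Radon--Nikodym property against a measure of the special form $\nu_{\mathcal I}(B) = e$ for $B \notin \mathcal I$ and $\nu_{\mathcal I}(B) = 0$ for $B \in \mathcal I$, where $e$ is the unit of $\odot$ and $\mathcal I$ is a suitably chosen $\sigma$-ideal of $\mathrsfs{B}$ containing the $\sigma$-ideal $\mathrsfs{N}$ of $\tau$-negligible sets. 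One checks routinely that such a $\nu_{\mathcal I}$ is a $\sigma$-maxitive measure (using that $\mathcal I$ is downward closed and stable under countable unions) and that it is $\odot$-dominated by $\tau$ (since $\mathcal I \supseteq \mathrsfs{N}$); by hypothesis it therefore admits a density $c$, i.e.\ $\nu_{\mathcal I}(B) = \dint{B} c \odot d\tau$ for every $B \in \mathrsfs{B}$.

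The computational heart, shared by both parts, is a description of the null sets of such an integral. Writing $\dint{B} c \odot d\tau$ as the supremum over $t > 0$ of $t \odot \tau(B \cap \{c \ge t\})$, the absence of zero divisors --- part of the non-degeneracy of $\odot$ --- yields
\[
\dint{B} c \odot d\tau = 0 \iff \tau\bigl(B \cap \{c > 0\}\bigr) = 0 ,
\]
where in the rightmost expression one passes from $\{c \ge 1/n\}$ to $\{c > 0\}$ by $\sigma$-maxitivity of $\tau$. Applied to $\nu_{\mathcal I}$ this reads $\mathcal I = \{B \in \mathrsfs{B} : \tau(B \setminus A) = 0\}$ with $A := \{c = 0\}$, so that $A$ is a greatest element of $\mathcal I$ modulo negligible sets: indeed $A \in \mathcal I$, and $\tau(B \setminus A) = 0$ for every $B \in \mathcal I$. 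Running this over all $\sigma$-ideals containing $\mathrsfs{N}$ (the case of a general $\sigma$-ideal reducing to this by adjoining $\mathrsfs{N}$) shows that $\tau$ is $\sigma$-principal.

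For $\sigma$-$\odot$-finiteness I would specialise the device to $\mathcal I_{\mathrm{fin}}$, the $\sigma$-ideal of all countable unions of sets of $\odot$-finite $\tau$-measure, so that $\tau$ is $\sigma$-$\odot$-finite precisely when $E \in \mathcal I_{\mathrm{fin}}$. Suppose $E \notin \mathcal I_{\mathrm{fin}}$ and let $c$ be a density of $\nu_{\mathcal I_{\mathrm{fin}}}$. The total mass $\dint{E} c \odot d\tau = \nu_{\mathcal I_{\mathrm{fin}}}(E) = e$ is $\odot$-finite; since the $\odot$-finite values form a down-set and, whenever $t > 0$, $t \odot s$ being $\odot$-finite forces $s$ to be $\odot$-finite (again a consequence of non-degeneracy), each $\tau(\{c \ge 1/n\})$ is $\odot$-finite, hence $\{c > 0\} = \bigcup_n \{c \ge 1/n\} \in \mathcal I_{\mathrm{fin}}$. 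On the other hand $\{c = 0\} \in \mathcal I_{\mathrm{fin}}$ because $\dint{\{c = 0\}} c \odot d\tau = 0$ puts $\{c = 0\}$ in $\nu_{\mathcal I_{\mathrm{fin}}}^{-1}(\{0\}) = \mathcal I_{\mathrm{fin}}$. Therefore $E = \{c > 0\} \cup \{c = 0\} \in \mathcal I_{\mathrm{fin}}$, contradicting the choice of $E$; so $\tau$ is $\sigma$-$\odot$-finite.

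The step I expect to be genuinely delicate is the very last one: extracting, from the bare axioms of a \emph{non-degenerate} pseudo-multiplication, the three elementary facts on which everything above rests --- that $\odot$ has no zero divisors, that the $\odot$-finite values form a down-set, and that $t \odot s$ being $\odot$-finite with $t > 0$ forces $s$ to be $\odot$-finite --- so that the Shilkret integral, the Sugeno integral and everything in between are treated uniformly. The remaining issues are routine bookkeeping: checking that ``$\odot$-dominated'' and ``$\odot$-absolutely continuous'' designate the same relation, and handling $\sigma$-ideals not known in advance to contain $\mathrsfs{N}$.
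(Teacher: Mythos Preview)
Your argument for $\sigma$-principality has a genuine gap: the claim that $\nu_{\mathcal I}$ is $\odot$-dominated by $\tau$ whenever $\mathcal I \supseteq \mathrsfs{N}$ is false in general. Recall that $\nu \ll_{\odot} \tau$ means $\nu(B) \leqslant \infty \odot \tau(B)$ for every $B$ with $\tau(B)$ $\odot$-finite; the inclusion $\mathcal I \supseteq \mathrsfs{N}$ only handles the case $\tau(B)=0$. Take the Sugeno case $\odot = \wedge$: here $1_{\odot}=\infty$, every value is $\odot$-finite, and $\infty \odot \tau(B) = \tau(B)$, so $\nu \ll_{\odot} \tau$ collapses to $\nu \leqslant \tau$. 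But $\nu_{\mathcal I}(B)=1_{\odot}=\infty$ for $B\notin\mathcal I$, so $\nu_{\mathcal I}\leqslant\tau$ would force $\tau(B)=\infty$ for all $B\notin\mathcal I$ --- certainly not a consequence of $\mathcal I \supseteq \mathrsfs{N}$. Hence the Radon--Nikodym hypothesis does not apply to $\nu_{\mathcal I}$, and your extraction of a greatest element of $\mathcal I$ modulo null sets does not go through. The paper avoids this by testing the Radon--Nikodym property against measures that are \emph{pointwise below} $\tau$: first $\nu(B)=\bigoplus_{I\in\mathcal I}\tau(B\cap I)$, which yields only localizability (the set $L=\{c\neq 0\}$ need not lie in $\mathcal I$), and then a second, more delicate test measure $\nu(B)=\inf\{t>0:B\in\mathrsfs{J}_t\}$ with $\mathrsfs{J}_t=\{I\cup B':I\in\mathcal I,\ \tau(B')\leqslant t\}$, again satisfying $\nu\leqslant\tau$, to force $L$ into $\mathcal I$ up to a null set.

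By contrast, your argument for $\sigma$-$\odot$-finiteness is correct and in fact more direct than the paper's four-step route through $\odot$-spots, the bound $\tau(E)\leqslant\phi$, and semi-$\odot$-finiteness. The reason your $\nu_{\mathcal I_{\mathrm{fin}}}$ escapes the pitfall above is precisely that $\mathcal I_{\mathrm{fin}}$ contains \emph{every} set of $\odot$-finite $\tau$-measure, so the defining inequality of $\ll_{\odot}$ is asked only of sets on which $\nu_{\mathcal I_{\mathrm{fin}}}$ already vanishes. (A small quibble: absence of zero divisors is one of the standing axioms for $\odot$, not a consequence of non-degeneracy; the facts you actually need from non-degeneracy are that $1_{\odot}$ is $\odot$-finite and that $t\odot s$ $\odot$-finite with $t>0$ forces $s$ $\odot$-finite, both supplied by Lemma~\ref{lem:inv}.)
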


This result ensures the minimality of the conditions of $\sigma$-$\odot$-finiteness and $\sigma$-principality. 
We shall prove it with the help of the ``quotient space'' associated with the $\sigma$-maxitive measure, i.e.\ we shall get rid of negligible sets by an appropriate equivalence relation. 
Such a characterization will be useful in a future work to try to investigate \textit{spaces} (like modules over the idempotent semifield $\reels^{\max}_+ = (\reels_+, \oplus, \times)$) with the Radon--Nikodym property; see the discussion in \cite{Poncet12b}. 

The paper is organized as follows. 
We introduce pseudo-multiplications $\odot$ and some of their properties in Section~\ref{secmax}; we also recall the notions of $\sigma$-maxitive measure and idempotent $\odot$-integral and their basic properties. 
In Section~\ref{secRN} we recall the Radon--Nikodym type theorem for the idempotent $\odot$-integral proved by Sugeno and Murofushi. 
In Section~\ref{sec:quotient} we define the quotient space associated with a $\sigma$-maxitive measure and characterize maxitive measures satisfying the Radon--Nikodym property. 
The usual multiplication $\times$ and the minimum $\wedge$ are particular cases of the general binary relation $\odot$, so our main result specializes to both the Shilkret and the Sugeno integrals. 


\section{Preliminaries on maxitives measures and idempotent integration}\label{secmax}

\subsection{Pseudo-multiplications and their properties}\label{par:odot}

In this paper, we consider a binary relation $\odot$ defined on $\overline{\reels}_+ \times \overline{\reels}_+$ with the following properties: 
\begin{itemize}
	\item associativity; 
	\item continuity on $(0, \infty) \times [0, \infty]$; 
	\item continuity of the map $s \mapsto s \odot t$ on $(0, \infty]$, for all $t$; 
	\item monotonicity in both components; 
	\item existence of a left identity element $1_{\odot}$, i.e.\ $1_{\odot} \odot t =  t$ for all $t$; 
	\item absence of zero divisors, i.e.\ $s \odot t = 0$ implies $0 \in  \{s, t\}$, for all $s, t$; 
	\item $0$ is an annihilator, i.e.\ $0 \odot t = t \odot 0 = 0$, for all $t$. 
\end{itemize}
We call such a $\odot$ a \textit{pseudo-multiplication}. Note that the axioms above are stronger than in \cite{Sugeno87}, where associativity is not assumed. 

We consider the map $O : \overline{\reels}_+ \rightarrow \overline{\reels}_+$ defined by $O(t) = \inf_{s > 0} s \odot t$. 
An element $t$ of $\overline{\reels}_+$ is \textit{$\odot$-finite} if $O(t) = 0$ (and $t$ is \textit{$\odot$-infinite} otherwise). 
We conventionally write $t \ll_{\odot} \infty$ for a $\odot$-finite element $t$. 
If $O(1_{\odot}) = 0$, we say that the pseudo-multiplication $\odot$ is \textit{non-degenerate}. This amounts to say that the set of $\odot$-finite elements differs from $\{0\}$. 

In what follows, we shall use the three following results, the proofs of which are given in a separate note (see Poncet \cite{Poncet13a}). 

\begin{lemma}\label{lem:inv}
Given a non-degenerate pseudo-multiplication $\odot$, the following conditions are equivalent for an element $t \in \overline{\reels}_+$: 
\begin{itemize}
	\item $t$ is $\odot$-finite;
	\item $s \odot t \ll_{\odot} \infty$ for some $s > 0$; 	
	\item $s \odot t \leqslant 1_{\odot}$ for some $s > 0$; 
	\item $t \odot s' \leqslant 1_{\odot}$ for some $s' > 0$; 
	\item $t \odot s' \ll_{\odot} \infty$ for some $s' > 0$.  	
\end{itemize}
\end{lemma}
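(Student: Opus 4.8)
The plan is to prove the chain of implications cyclically, exploiting associativity and monotonicity of $\odot$ together with the non-degeneracy hypothesis $O(1_{\odot}) = 0$. First I would show that $t$ $\odot$-finite implies $s \odot t \leqslant 1_{\odot}$ for some $s > 0$: since $O(1_{\odot}) = 0$, for any $\varepsilon > 0$ there is $u > 0$ with $u \odot 1_{\odot} = u \leqslant \varepsilon$, so small positive numbers exist below $1_{\odot}$; combining this with $O(t) = \inf_{s>0} s \odot t = 0$ and the continuity of $s \mapsto s \odot t$ on $(0,\infty]$, one extracts $s > 0$ with $s \odot t$ as small as desired, in particular $s \odot t \leqslant 1_{\odot}$. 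Then $s \odot t \leqslant 1_{\odot}$ trivially gives $s \odot t \ll_{\odot} \infty$ because $O$ is monotone and $O(1_{\odot}) = 0$ forces $O(s\odot t) = 0$.

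Next I would close the loop from $s \odot t \ll_{\odot} \infty$ for some $s > 0$ back to $t$ $\odot$-finite. Here associativity is the key tool: if $s \odot t$ is $\odot$-finite, then $O(s \odot t) = \inf_{r > 0} r \odot (s \odot t) = \inf_{r > 0} (r \odot s) \odot t = 0$. Since $s > 0$ and $\odot$ has no zero divisors, $r \odot s$ ranges over a set of positive reals as $r$ ranges over $(0,\infty)$; a short argument using continuity and monotonicity of $r \mapsto r \odot s$ on $(0,\infty)$ shows that $\{r \odot s : r > 0\}$ is cofinal towards $0$ from above (its infimum is $O(s) \leqslant s \odot 1_\odot$-type bound, and in fact the infimum over $r$ of $(r\odot s)\odot t$ dominates $O(t)$ from above only if... ) — more carefully, one shows $\inf_{r>0}(r\odot s)\odot t \geqslant \inf_{u>0} u \odot t = O(t)$ would go the wrong way, so instead I use that for every $u$ in the range of $r \mapsto r \odot s$ we have $u \odot t \geqslant O(t)$, hence $0 = O(s \odot t) \geqslant O(t)$, giving $O(t) = 0$. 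The symmetric statements with $t \odot s'$ in place of $s \odot t$ follow identically, using that $1_{\odot}$ is a \emph{left} identity together with the continuity hypothesis on $(0,\infty)\times[0,\infty]$ to handle the right-multiplication version.

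The main obstacle I anticipate is the direction $s \odot t \leqslant 1_{\odot} \Rightarrow t \ll_{\odot}\infty$ done correctly, i.e.\ verifying that the infimum defining $O$ genuinely collapses to $0$: one must ensure that multiplying the inequality $s \odot t \leqslant 1_{\odot}$ on the left by arbitrarily small positive scalars $r$ and using associativity, $r \odot (s \odot t) = (r \odot s) \odot t \leqslant r \odot 1_\odot = r$, but this requires knowing $r \odot 1_{\odot} = r$ (true, left identity) and monotonicity to pass the inequality through — and then letting $r \downarrow 0$ forces $O(s \odot t) = 0$, but we want $O(t) = 0$, so again one substitutes $r' = r \odot s$ and checks $r' \downarrow 0$ is achievable, which is exactly where non-degeneracy ($O(1_\odot)=0$, equivalently arbitrarily small positive elements can appear as $r \odot s$) is used. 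Once that technical point is pinned down, the remaining implications are immediate from monotonicity of $O$ and the defining inequalities, and I would present the whole argument as a single cycle
\[
t \ll_\odot \infty \;\Rightarrow\; s\odot t \leqslant 1_\odot \;\Rightarrow\; s \odot t \ll_\odot \infty \;\Rightarrow\; t \ll_\odot \infty,
\]
together with the parallel cycle through $t \odot s'$, which suffices since the two ``$\leqslant 1_\odot$'' conditions both follow from and imply $\odot$-finiteness of $t$.
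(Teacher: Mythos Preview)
The paper does not actually prove this lemma: immediately before stating it, the text says that the proofs of Lemma~\ref{lem:inv}, Theorem~\ref{lem:phi}, and Corollary~\ref{lem:dec} ``are given in a separate note (see Poncet~\cite{Poncet13a})''. There is therefore no proof in the present paper to compare your attempt against.

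Judged on its own, your left-hand cycle (through $s \odot t$) is essentially sound, but the write-up contains a slip and an unnecessary detour. You write ``$u \odot 1_{\odot} = u$'', but $1_{\odot}$ is only assumed to be a \emph{left} identity ($1_{\odot} \odot u = u$); fortunately this is irrelevant, since $O(t)=0$ and $1_{\odot}>0$ already yield some $s>0$ with $s\odot t \leqslant 1_{\odot}$ directly. Your worry that $\inf_{r>0}(r\odot s)\odot t \geqslant O(t)$ ``goes the wrong way'' is also misplaced: this inequality is exactly what you need, since the left side equals $O(s\odot t)=0$, forcing $O(t)=0$.

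The right-hand cycle (through $t \odot s'$) is where the plan has a genuine gap. You assert that it ``follows identically'' via continuity on $(0,\infty)\times[0,\infty]$, but this hypothesis excludes $t=\infty$ in the first argument, and the axioms are not left/right symmetric (left identity only, and the one-sided continuity assumption on $s\mapsto s\odot t$). In particular, when $F_{\odot}=[0,\infty]$ so that $\infty$ itself is $\odot$-finite, you must still exhibit $s'>0$ with $\infty \odot s' \leqslant 1_{\odot}$, and nothing in your sketch handles this. Likewise, the step ``$t\odot s' \ll_{\odot}\infty \Rightarrow t \ll_{\odot}\infty$'' requires showing that $\inf_{r}(r\odot t)\odot s'=0$ forces $\inf_{r} r\odot t=0$; this does follow from monotonicity and the absence of zero divisors, but it is not literally the same computation as on the left, and you should spell it out rather than invoke symmetry.
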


\begin{theorem}\label{lem:phi}
Given a pseudo-multiplication $\odot$, the following conditions are equivalent: 
\begin{itemize}
	\item $\odot$ is non-degenerate, i.e.\ $1_{\odot}$ is $\odot$-finite; 
	\item there exists some positive $\odot$-finite element; 
	\item the monoid $([0, 1_{\odot}], \odot)$ is commutative; 
	\item the set $F_{\odot}$ of $\odot$-finite elements is either $[0, \infty]$ or of the form $[0, \phi)$ for some $\phi \in (1_{\odot}, \infty]$. 
\end{itemize}
Moreover, in the case where $F_{\odot} = [0, \phi)$, 
then $\phi$ satisfies $O(\phi) = \phi$ and $t \odot \phi = \phi \odot t = \phi$, for all $0 < t \leqslant \phi$. In particular, $\phi$ is idempotent, i.e.\ $\phi \odot \phi = \phi$. 
\end{theorem}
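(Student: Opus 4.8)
The plan is to establish the cycle of implications in the natural order: non-degeneracy $\Rightarrow$ existence of a positive $\odot$-finite element $\Rightarrow$ commutativity of $([0,1_\odot],\odot)$ $\Rightarrow$ the structure statement for $F_\odot$ $\Rightarrow$ non-degeneracy. The first implication is immediate from the definition, since $1_\odot$ is positive (indeed $1_\odot \odot 1_\odot = 1_\odot \neq 0$, so $1_\odot > 0$ because $0$ is an annihilator). The last implication is also immediate: if $F_\odot$ has the stated form it contains a positive element, hence in particular it is not $\{0\}$, which by definition means $\odot$ is non-degenerate. So the real content is in the two middle implications together with the ``Moreover'' part.

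For ``existence of a positive $\odot$-finite element $\Rightarrow$ commutativity of $([0,1_\odot],\odot)$'', I would first use Lemma~\ref{lem:inv} to transfer the existence of a positive $\odot$-finite element into the existence, for each $\odot$-finite $t$, of some $s>0$ with $s\odot t \leqslant 1_\odot$ and some $s'>0$ with $t\odot s'\leqslant 1_\odot$; the point is that $1_\odot$ itself is $\odot$-finite, so there is $s_0>0$ with $s_0\odot 1_\odot \leqslant 1_\odot$, i.e.\ $s_0\leqslant 1_\odot$. Iterating and using monotonicity and associativity, one shows that arbitrarily small positive scalars act as left-multipliers keeping $[0,1_\odot]$ inside itself; combined with continuity of $s\mapsto s\odot t$ on $(0,\infty]$ one passes to the limit to control the behaviour at $0$. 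Commutativity on $[0,1_\odot]$ should then follow by a density/continuity argument: the subsemigroup generated by a small scalar and $1_\odot$ is ``one-dimensional'' enough that associativity forces $a\odot b = b\odot a$ first on a dense subset, then everywhere by continuity on $(0,\infty)\times[0,\infty]$, with the boundary values $a=0$ or $b=0$ handled by the annihilator axiom.

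For ``commutativity $\Rightarrow$ the structure of $F_\odot$'', the key observation is that $F_\odot$ is a down-set: if $t\in F_\odot$ and $t'\leqslant t$ then $O(t')\leqslant O(t)=0$ by monotonicity of $\odot$ in its second argument. Hence $F_\odot$ is an interval containing $0$, so it is either all of $[0,\infty]$ or of the form $[0,\phi)$ or $[0,\phi]$ for some $\phi$. One must rule out the half-open-at-$\phi$-from-above case, i.e.\ show that if $\phi := \sup F_\odot \notin\{0,\infty\}$ then $\phi\notin F_\odot$; this is where commutativity (or the continuity hypotheses) enters, via a computation showing $O(\phi) = \phi$. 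Concretely, for $0<t\leqslant\phi$ pick $\odot$-finite elements approaching $\phi$ from below; commutativity lets one write $t\odot(\text{something small})$ in a form that, after taking infima, collapses to $\phi$ rather than to $0$ — this simultaneously gives $t\odot\phi = \phi\odot t = \phi$ and $O(\phi)=\phi\neq 0$, so $\phi\notin F_\odot$. Idempotency $\phi\odot\phi=\phi$ is then the special case $t=\phi$. Also $\phi>1_\odot$ because $1_\odot\in F_\odot$ and $\phi\notin F_\odot$.

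The main obstacle I expect is the passage from ``a positive $\odot$-finite element exists'' to full commutativity on $[0,1_\odot]$: one has algebraic information only about left-multiplication by an identity and scalar-type elements, and associativity alone does not give commutativity for a general ordered semigroup, so the argument must genuinely exploit the continuity axioms to push an identity established on a generated (essentially totally ordered, Archimedean-like) subsemigroup out to the whole interval. Getting the limiting behaviour at the endpoints $0$ and $\phi$ right — where continuity is only assumed on $(0,\infty)\times[0,\infty]$ and separately for $s\mapsto s\odot t$ on $(0,\infty]$ — is the delicate bookkeeping. Since the excerpt defers these proofs to Poncet~\cite{Poncet13a}, I would in the present paper simply cite that note for the detailed verification, but the skeleton above is the route I would follow.
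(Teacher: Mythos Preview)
The paper itself does not prove Theorem~\ref{lem:phi}: immediately before stating it, the paper says that the proofs of Lemma~\ref{lem:inv}, Theorem~\ref{lem:phi}, and Corollary~\ref{lem:dec} ``are given in a separate note (see Poncet~\cite{Poncet13a})''. Your final sentence already notes this and proposes to do exactly the same, which matches the paper; there is no in-paper argument to compare your sketch against.

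On the sketch itself: the cycle of implications and your identification of the hard step (existence of a positive $\odot$-finite element $\Rightarrow$ commutativity of $([0,1_\odot],\odot)$) are reasonable, and you are right that this step genuinely needs the continuity axioms and not just associativity. One point to tighten in the implication ``commutativity $\Rightarrow$ structure of $F_\odot$'': before discussing whether $\phi\in F_\odot$, you should first observe that commutativity on $[0,1_\odot]$ immediately gives $s\odot 1_\odot = 1_\odot\odot s = s$ for $0<s\leqslant 1_\odot$, whence $O(1_\odot)=\inf_{0<s\leqslant 1_\odot}s=0$, so $1_\odot\in F_\odot$. Without this, nothing in your argument forces $F_\odot\supsetneq\{0\}$ or anchors the inequality $\phi>1_\odot$. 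Also, your appeal to Lemma~\ref{lem:inv} is legitimate but should be flagged: that lemma is stated under the standing hypothesis that $\odot$ is non-degenerate, which the paper remarks is equivalent to the existence of a positive $\odot$-finite element, so no circularity arises --- but it is worth saying so explicitly.
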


\begin{corollary}\label{lem:dec}
Given a pseudo-multiplication $\odot$, it is not possible to find $t < \phi$ and $t' > \phi$ such that $t \odot t' = \phi$, if $\phi$ denotes the supremum of the set of $\odot$-finite elements. 
\end{corollary}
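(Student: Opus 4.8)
The plan is to write $\phi$ for the supremum of the set $F_\odot$ of $\odot$-finite elements and to run through the few possibilities that Theorem~\ref{lem:phi} permits for $F_\odot$, the only one requiring genuine work being $F_\odot = [0,\phi)$ with $\phi$ finite. First the trivial cases: if $\odot$ is degenerate then $F_\odot = \{0\}$ (this is exactly the failure of non-degeneracy, $0$ always being $\odot$-finite since $O(0) = 0$), so $\phi = 0$ and no element of $\overline{\reels}_+$ lies strictly below $\phi$. If $\odot$ is non-degenerate, Theorem~\ref{lem:phi} gives either $F_\odot = [0,\infty]$ or $F_\odot = [0,\phi)$ with $\phi \in (1_\odot,\infty]$; in the first case, and in the second when $\phi = \infty$, we have $\phi = \infty$, so no element of $\overline{\reels}_+$ lies strictly above $\phi$. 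In all these cases there is no pair $t, t'$ with $t < \phi < t'$ whatsoever, and the assertion holds vacuously.

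It remains to treat $F_\odot = [0,\phi)$ with $\phi$ finite, so that $\phi \in (1_\odot,\infty)$; here I would argue by contradiction. Suppose $t \odot t' = \phi$ with $t < \phi < t'$. Since $\phi > 1_\odot \geqslant 0$ we have $\phi > 0$, so this forces $t \neq 0$, hence $0 < t < \phi$. The ``moreover'' part of Theorem~\ref{lem:phi}, applied to this $\phi$, yields $\phi \odot t = \phi$ (as $0 < t \leqslant \phi$) and $\phi \odot \phi = \phi$, whence, by associativity,
\[
\phi \;=\; \phi \odot \phi \;=\; \phi \odot (t \odot t') \;=\; (\phi \odot t) \odot t' \;=\; \phi \odot t'.
\]
On the other hand, $1_\odot \leqslant \phi$ together with monotonicity in the first argument gives $t' = 1_\odot \odot t' \leqslant \phi \odot t' = \phi$, contradicting $t' > \phi$.

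Since the heavy lifting is done by Theorem~\ref{lem:phi}, there is no real obstacle beyond keeping the case distinction straight and invoking the ``moreover'' clause with the correct $\phi$ and the range $0 < t \leqslant \phi$. One could alternatively dispatch the main case without this computation: were $\phi$ finite, then $t \mapsto \phi \odot t$ would jump from $\phi \odot 0 = 0$ to $\phi \odot t = \phi$ as $t \downarrow 0$, violating continuity of $\odot$ at the point $(\phi, 0) \in (0,\infty) \times [0,\infty]$; but the two-line contradiction above is cleaner, using only associativity, monotonicity, a left identity below $\phi$, and Theorem~\ref{lem:phi}.
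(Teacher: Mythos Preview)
Your argument is correct. The case split you make is forced by Theorem~\ref{lem:phi}, and in the only non-trivial branch $F_\odot=[0,\phi)$ with $\phi$ finite your short contradiction works: from $\phi\odot t=\phi$ (the ``moreover'' clause of Theorem~\ref{lem:phi}) and associativity you obtain $\phi\odot t'=\phi$, whence $t'=1_\odot\odot t'\leqslant\phi\odot t'=\phi$ by monotonicity and $1_\odot\leqslant\phi$, contradicting $t'>\phi$.

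As to comparison with the paper: this corollary is one of the three results whose proofs the paper explicitly defers to the separate note \cite{Poncet13a}, so there is no in-text argument to weigh yours against. Your proof stands on its own using only the axioms listed in \S\ref{par:odot} together with Theorem~\ref{lem:phi}.

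One small remark on your closing paragraph: the alternative ``continuity'' argument you sketch actually proves more than the corollary, namely that under the stated continuity axiom the case $F_\odot=[0,\phi)$ with $\phi<\infty$ cannot occur at all (since $s\mapsto\phi\odot s$ would be discontinuous at $s=0$ while $(\phi,0)\in(0,\infty)\times[0,\infty]$). That is a correct observation, but since Theorem~\ref{lem:phi} is stated as if this case may arise, relying on your first, purely algebraic contradiction is the safer choice here.
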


\subsection{Definition of maxitive measures} 

Let $E$ be a nonempty set. 
A collection $\mathrsfs{B}$ of subsets of $E$ containing $E$, closed under countable unions and the formation of complements is a \textit{$\sigma$-algebra}. 
A \textit{$\sigma$-ideal} of a $\sigma$-algebra $\mathrsfs{B}$ is a nonempty subset $\mathrsfs{I}$ of $\mathrsfs{B}$ that is closed under countable unions and such that $A \subset B \in \mathrsfs{I}$ and $A \in \mathrsfs{B}$ imply $A \in \mathrsfs{I}$. 

Assume in all the sequel that $\mathrsfs{B}$ is a $\sigma$-algebra on $E$. 
A \textit{set function} on $\mathrsfs{B}$ is a map $\mu : \mathrsfs{B} \rightarrow \overline{\reels}_+$ equal to zero at the empty set. A set function $\mu$ is 
 \textit{monotone} if $\mu(B) \leqslant \mu(B')$ for all $B, B' \in \mathrsfs{B}$ such that $B \subset B'$. A monotone set function $\mu$ is \textit{$\odot$-finite} if $\mu(E) \ll_{\odot} \infty$, and \textit{$\sigma$-$\odot$-finite} if there exists some countable family $\{B_n\}_{n \in \mathbb{N}}$ of elements of $\mathrsfs{B}$ covering $E$ such that $\mu(B_n) \ll_{\odot} \infty$ for all $n$; a subset $N$ of $E$ is $\mu$-\textit{negligible} if it is contained in some $B \in \mathrsfs{B}$ such that $\mu(B) = 0$. 



A \textit{maxitive} (resp.\ \textit{$\sigma$-maxitive}) \textit{measure} on $\mathrsfs{B}$ is a set function $\nu$ on $\mathrsfs{B}$ such that, for every finite (resp.\ countable) family $\{B_j\}_{j\in J}$ of elements of $\mathrsfs{B}$, 
\begin{equation}\label{eqmax}
\nu(\bigcup_{j\in J} B_j) = \bigoplus_{j \in J} \nu(B_j). 
\end{equation}
Examples of ($\sigma$-)maxitive measures were collected in \cite[Chapter~I]{Poncet11}. 


\subsection{Reminders on the idempotent $\odot$-integral}\label{intshilkret}

The first extension of the Lebesgue integral, among many others, was proposed by Vitali \cite{Vitali25b}, who replaced $\sigma$-additive measures by some more general set functions. 
Decades later, the Choquet integral (see Choquet \cite{Choquet54} for the definition) was born, with the same idea of using ``capacities'' instead of measures. 

Inspired by Choquet, many authors have intended to replace operations $(+, \times)$, which are the basic algebraic framework of both the Lebesgue and the Choquet integrals, by some more general pair $(\dot{+}, \dot{\times})$ of binary relations on $\reels_+$ or $\overline{\reels}_+$. 
In the case where $(\dot{+}, \dot{\times})$ is the pair $(\max, \times)$, one gets the \textit{Shilkret integral} discovered by Shilkret \cite{Sugeno74}. 
If $(\dot{+}, \dot{\times})$ is the pair $(\max, \min)$, one gets the \textit{Sugeno integral} or \textit{fuzzy integral} due to Sugeno \cite{Sugeno74}. 
In the general case, one talks about the \textit{pan-integral} or \textit{seminormed fuzzy integral}, see e.g.\ Weber \cite{Weber84}, Sugeno and Murofushi \cite{Sugeno87}, Wang and Klir \cite{Wang92}, Pap \cite{Pap95, Pap02b}. 
In this paper, we shall limit our attention to the case where $\dot{+}$ is the maximum operation $\max = \oplus$ and $\dot{\times}$ is a pseudo-multiplication (i.e.\ a binary relation $\odot$ satisfying the properties given in §~\ref{par:odot}). 

A map $f : E \rightarrow \overline{\reels}_+$ is \textit{$\mathrsfs{B}$-measurable} 
if $\{ f > t \} := \{ x \in E : f(x) > t \} \in \mathrsfs{B}$, for all $t \in \reels_+$. 

\begin{definition}\cite{Sugeno87}
Let $\nu$ be a maxitive measure on $\mathrsfs{B}$, and let $f : E \rightarrow \overline{\reels}_+$ be a $\mathrsfs{B}$-measurable map. The \textit{idempotent $\odot$-integral}  
of $f$ with respect to $\nu$ is defined by 
\begin{equation}\label{defint}
\nu(f) = \dint{E} f \odot d\nu = \bigoplus_{t \in \reels_+} t \odot \nu(f > t). 
\end{equation}
\end{definition}

The occurrence of $\infty$ in the notation $\ddint \,$ is \textit{not} an integration bound, see \cite[Theorem~I-5.7]{Poncet11} for a justification.

\begin{proposition}
Let $\nu$ be a $\sigma$-maxitive measure on $\mathrsfs{B}$. 
Then, for all $\mathrsfs{B}$-measurable maps $f, g : E \rightarrow \overline{\reels}_+$, and all $r \in \reels_+$, $B \in \mathrsfs{B}$, the following properties hold: 
\begin{itemize}
	\item $\nu(1_B) = \nu(B)$, 
	\item homogeneity: $\nu(r \odot f) = r \odot \nu(f)$, 
	\item $\sigma$-maxitivity: $\nu(\bigoplus_n f_n) = \bigoplus_n \nu(f_n)$, for every sequence of $\mathrsfs{B}$-measurable maps $f_n : E \rightarrow \overline{\reels}_+$, 
	\item $B \mapsto \dint{B} f \odot d\nu$ is a $\sigma$-maxitive measure on $\mathrsfs{B}$, 
\end{itemize}
\end{proposition}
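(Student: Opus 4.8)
The plan is to derive all four items from the defining formula \eqref{defint}, the decisive ingredient being the following distributivity property of the pseudo-multiplication over suprema: \emph{for every $s \in \overline{\reels}_+$ and every family $(a_i)$ in $\overline{\reels}_+$ one has $s \odot \bigoplus_i a_i = \bigoplus_i (s \odot a_i)$, and likewise $(\bigoplus_i a_i) \odot s = \bigoplus_i (a_i \odot s)$}. For \emph{finite} families this is immediate, since a monotone self-map of the chain $\overline{\reels}_+$ preserves finite suprema. To reach arbitrary families one first reduces to nondecreasing sequences (replacing $(a_i)$ by its finite partial suprema), and then uses the continuity of $(s,t) \mapsto s \odot t$ on $(0,\infty) \times [0,\infty]$ to treat finite arguments and the continuity of $s \mapsto s \odot t$ at $\infty$ (hence the left-continuity of $t \mapsto s \odot t$, obtained as a supremum of continuous maps) to treat the argument $\infty$; the argument $0$ is trivial by the annihilator axiom. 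I regard establishing this lemma cleanly as the technical core of the proposition.

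Granting the lemma, the first item is a direct unwinding of \eqref{defint}: recalling that $1_B$ is the function taking the value $1_\odot$ on $B$ and $0$ elsewhere, one has $\{1_B > t\} = B$ for $t < 1_\odot$ and $\{1_B > t\} = \emptyset$ otherwise, so the terms with $t \geqslant 1_\odot$ vanish and $\nu(1_B) = \bigoplus_{t < 1_\odot}(t \odot \nu(B)) = \big(\bigoplus_{t < 1_\odot} t\big) \odot \nu(B) = 1_\odot \odot \nu(B) = \nu(B)$, using the lemma and the left-continuity of $s \mapsto s \odot \nu(B)$ at $1_\odot$. For homogeneity one may assume $r > 0$ (the case $r = 0$ being the annihilator axiom); by associativity and the lemma, $r \odot \nu(f) = \bigoplus_{t}(r \odot t) \odot \nu(f > t)$, while $\nu(r \odot f) = \bigoplus_u u \odot \nu(r \odot f > u)$. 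Since $t \mapsto r \odot t$ is nondecreasing and left-continuous, each set $\{r \odot f > u\}$ is itself a super-level set $\{f > \beta(u)\}$ of $f$, where $\beta(u) = \sup\{t : r \odot t \leqslant u\}$; one then checks the two inequalities between the suprema $\bigoplus_u u \odot \nu(f > \beta(u))$ and $\bigoplus_t (r \odot t) \odot \nu(f > t)$ by comparing individual terms, using monotonicity, the lemma, and the left-continuity of $t \mapsto r \odot t$ (and, for one direction, writing $\{f > \beta(u)\}$ as the increasing union $\bigcup_n \{f > \beta(u) + 1/n\}$ and invoking the $\sigma$-maxitivity of $\nu$). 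The point that requires care here is precisely that $t \mapsto r \odot t$ need not be injective --- as $\odot = \wedge$ shows --- so there is no genuine ``division by $r$'' and one must argue through the generalized inverse $\beta$.

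The last two items rest on the set identity $\{\bigoplus_n f_n > t\} = \bigcup_n \{f_n > t\}$ (which in particular shows $\bigoplus_n f_n$ is $\mathrsfs{B}$-measurable). Combining it with the $\sigma$-maxitivity of $\nu$ and the lemma, $\nu(\bigoplus_n f_n) = \bigoplus_t t \odot \bigoplus_n \nu(f_n > t) = \bigoplus_t \bigoplus_n \big(t \odot \nu(f_n > t)\big) = \bigoplus_n \bigoplus_t \big(t \odot \nu(f_n > t)\big) = \bigoplus_n \nu(f_n)$, where the middle equality is the lemma and the interchange of the two suprema is immediate. For the fourth item, note that $B \mapsto \dint{B} f \odot d\nu = \bigoplus_t t \odot \nu(\{f > t\} \cap B)$ equals $\nu(1_B \odot f)$ (since $1_B \odot f$ is $f$ on $B$ and $0$ off $B$, so $\{1_B \odot f > t\} = \{f > t\} \cap B$); it is $0$ at $\emptyset$ because $\nu(\emptyset) = 0$, and its $\sigma$-maxitivity follows either from $1_{\bigcup_n B_n} \odot f = \bigoplus_n (1_{B_n} \odot f)$ together with the third item, or by repeating verbatim the computation above with $\{f > t\}$ replaced by $\{f > t\} \cap B_n$ and using the $\sigma$-maxitivity of $\nu$ on those sets.

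In short, once the distributivity lemma is in place the four statements are routine; the only genuinely delicate step is homogeneity, because the lack of cancellativity of $\odot$ forces one to handle super-level sets of $r \odot f$ through a generalized inverse and to lean on the one-sided continuity built into the axioms rather than on any algebraic inversion.
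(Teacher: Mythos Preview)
Your proposal is correct and in fact supplies substantially more than the paper does: the paper's own ``proof'' of this proposition consists entirely of the reference ``See Sugeno and Murofushi \cite[Proposition~6.1]{Sugeno87}'', with no argument given. Your derivation of the four items from the defining formula, via the distributivity of $\odot$ over suprema (which you correctly justify from the monotonicity and one-sided continuity axioms), is sound; in particular your treatment of homogeneity through the generalized inverse $\beta(u)=\sup\{t:r\odot t\leqslant u\}$ is the right way to handle the non-cancellativity of $\odot$, and the case analysis $r\odot t=\infty$ versus $r\odot t<\infty$ in the direction $r\odot\nu(f)\leqslant\nu(r\odot f)$ goes through once one unfolds it. One small point worth flagging is that the identity $\nu(1_B)=\nu(B)$ only holds if $1_B$ is understood as the indicator taking the value $1_\odot$ on $B$ (as you assume), not the real number $1$; the paper leaves this implicit.
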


\begin{proof}
See Sugeno and Murofushi \cite[Proposition~6.1]{Sugeno87}. 
\end{proof}

Further properties of the idempotent $\odot$-integral might be found in \cite{Shilkret71}, \cite{Agbeko00}, \cite{Puhalskii01}, \cite{deCooman97}, and \cite{Poncet11} in the case where $\odot$ is the usual multiplication, i.e.\ where the Shilkret integral is considered. For the Sugeno integral, see e.g.\ \cite{Sugeno74}.

\section{The idempotent Radon--Nikodym theorem}\label{secRN} 

\subsection{Introduction}


In this section, we recall the Sugeno--Murofushi theorem, which states the existence of Radon--Nikodym derivatives for the 
idempotent $\odot$-integral \cite[Theorem~8.2]{Sugeno87}. 
Here, $\mathrsfs{B}$ still denotes a $\sigma$-algebra. 

Let $\nu$ and $\tau$ be maxitive measures on $\mathrsfs{B}$. Then $\nu$ \textit{has a density with respect to} $\tau$ if there exists some $\mathrsfs{B}$-measurable map (called \textit{density}) $c : E \rightarrow \overline{\reels}_+$ such that 
\begin{equation}\label{eq:dens}
\nu(B) = \dint{B} c \odot d\tau, 
\end{equation}
for all $B \in \mathrsfs{B}$. 
Note that, if $\nu$ has a density with respect to $\tau$, then $\nu$ is $\odot$-absolutely continuous with respect to $\tau$, according to the following definition. 

\begin{definition}\cite{Sugeno87}
Let $\nu$, $\tau$ be monotone set functions on $\mathrsfs{B}$. Then $\nu$ is \textit{$\odot$-absolutely continuous with respect to $\tau$} (or $\tau$ \textit{$\odot$-dominates} $\nu$), in symbols $\nu \ll_{\odot} \tau$, if for all $B \in \mathrsfs{B}$ such that $\tau(B)$ be $\odot$-finite, $\nu(B) \leqslant \infty \odot \tau(B)$. 
\end{definition}

Absolute continuity, although necessary in Equation~(\ref{eq:dens}), seems a priori too poor a condition for ensuring the existence of a density. 
For instance, every $\sigma$-maxitive measure $\nu$ is $\odot$-absolutely continuous with respect to the $\sigma$-maxitive measure $\delta_{\#}$, defined on the same $\sigma$-algebra $\mathrsfs{B}$ by $\delta_{\#}(B) = 1$ if $B$ is nonempty and $\delta_{\#}(\emptyset) = 0$; however, $\nu$ does not always have a density with respect to $\delta_{\#}$, and this latter measure is not $\sigma$-principal in general. 
We shall understand in §~\ref{subsec:existence} that $\odot$-absolute continuity is actually a necessary and sufficient condition for the existence of a density whenever the dominating measure is $\sigma$-$\odot$-finite and \textit{$\sigma$-principal}.

\subsection{$\odot$-Finiteness of the density} 




A set function $\nu : \mathrsfs{B} \rightarrow \overline{\reels}_+$ is \textit{semi-$\odot$-finite} if $\nu(B) = \bigoplus_{A \subset B} \nu(A)$ for all $B \in \mathrsfs{B}$, where the supremum is taken over $\{ A \in \mathrsfs{B} : A \subset B, \nu(A) \ll_{\odot} \infty \}$. 

\begin{proposition}\label{prop:semi-finite}
Let $\nu$, $\tau$ be $\sigma$-maxitive measures on $\mathrsfs{B}$. 
Assume that $\nu$ is semi-$\odot$-finite and admits a density $c$ with respect to $\tau$. Then $\nu$ admits a $\odot$-finite-valued density with respect to $\tau$. 
\end{proposition}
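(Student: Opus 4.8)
\emph{Outline of a proof.} The plan is to change the given density $c$ only where it takes $\odot$-infinite values, replacing it there by $0$, and to check that this modification affects neither measurability nor the idempotent $\odot$-integral, because the exceptional set turns out to be $\tau$-negligible. Two configurations of $\odot$ are disposed of immediately. If $\odot$ is degenerate, then $F_{\odot}=\{0\}$, so ``$\nu(A)\ll_{\odot}\infty$'' means ``$\nu(A)=0$''; together with semi-$\odot$-finiteness this forces $\nu\equiv 0$, and $c'\equiv 0$ is a $\odot$-finite-valued density. If $F_{\odot}=[0,\infty]$, every value of $c$ is already $\odot$-finite and we may take $c'=c$. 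By Theorem~\ref{lem:phi} we are thus left with the case $F_{\odot}=[0,\phi)$ for some $\phi\in(1_{\odot},\infty]$, and we set $N:=\{x\in E:c(x)\text{ is }\odot\text{-infinite}\}=\{c\geqslant\phi\}$, which lies in $\mathrsfs{B}$ because $c$ is $\mathrsfs{B}$-measurable (write $N$ as a countable intersection of sets $\{c>t\}$ with $t\in\reels_+$, $t\uparrow\phi$).

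The heart of the matter is the claim that $\tau(N)=0$. For any $A\in\mathrsfs{B}$ with $A\subset N$ and every real $t<\phi$ one has $N\subset\{c>t\}$, hence $\tau(A\cap\{c>t\})=\tau(A)$; plugging this into the definition~(\ref{defint}) of the integral and invoking the monotonicity and left-continuity of $r\mapsto r\odot\tau(A)$ on $(0,\infty]$ yields
$$
\nu(A)=\dint{A} c\odot d\tau\;\geqslant\;\bigoplus_{0<t<\phi}t\odot\tau(A)\;=\;\phi\odot\tau(A).
$$
On the other hand, Theorem~\ref{lem:phi} gives $\phi\odot s=\phi$ for $0<s\leqslant\phi$ and $\phi$ idempotent, so by monotonicity $\phi\odot s\geqslant\phi$ for every $s>0$; since $F_{\odot}=[0,\phi)$, such an element $\phi\odot s$ is $\odot$-infinite. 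Consequently, if some $A\subset N$ had $\tau(A)>0$, the displayed inequality would make $\nu(A)$ $\odot$-infinite; contrapositively, $A\subset N$ and $\nu(A)\ll_{\odot}\infty$ imply $\tau(A)=0$, whence $\nu(A)=\bigoplus_{t}t\odot\tau(A\cap\{c>t\})=0$. Applying semi-$\odot$-finiteness of $\nu$ to the set $N$ then gives $\nu(N)=0$, and feeding $A=N$ into the displayed inequality gives $\phi\odot\tau(N)\leqslant\nu(N)=0$; as $\phi>1_{\odot}>0$ and $\odot$ has no zero divisors, $\tau(N)=0$, proving the claim.

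To conclude, define $c'$ by $c'=c$ on $E\setminus N$ and $c'=0$ on $N$. Then $c'$ is $\mathrsfs{B}$-measurable, since $\{c'>t\}=\{c>t\}\setminus N\in\mathrsfs{B}$ for every $t\geqslant 0$, and it is $\odot$-finite-valued by construction. For all $B\in\mathrsfs{B}$ and $t\in\reels_+$ the sets $B\cap\{c>t\}$ and $B\cap\{c'>t\}$ differ by a subset of the $\tau$-negligible set $N$, so maxitivity and monotonicity of $\tau$ give $\tau(B\cap\{c'>t\})=\tau(B\cap\{c>t\})$, and therefore $\dint{B} c'\odot d\tau=\dint{B} c\odot d\tau=\nu(B)$. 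Thus $c'$ is a $\odot$-finite-valued density of $\nu$ with respect to $\tau$.

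I expect the main obstacle to be the two-part estimate just sketched — the inequality $\nu(A)\geqslant\phi\odot\tau(A)$ on subsets of $N$ and the fact that $\phi\odot s$ is $\odot$-infinite whenever $s>0$ — for it is here that one must exploit the structural information on $\phi=\sup F_{\odot}$ from Theorem~\ref{lem:phi} (one-sided absorption and idempotency) together with the left-continuity of the pseudo-multiplication; once this is in place, semi-$\odot$-finiteness propagates the vanishing from the ``small'' sets $A\subset N$ to $N$ itself and then to $\tau(N)$, and the rest is routine.
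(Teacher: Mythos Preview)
Your argument is correct and uses the same ingredients as the paper's proof: the structural description of $F_{\odot}$ and of $\phi$ from Theorem~\ref{lem:phi}, the estimate $\nu(A)\geqslant\phi\odot\tau(A)$ on subsets of the ``bad'' set, and semi-$\odot$-finiteness to pass from the $\odot$-finite pieces to the general case. The organization differs slightly: you establish the global fact $\tau(N)=0$ once (by applying semi-$\odot$-finiteness to $N$ itself and then using the absence of zero divisors), after which the agreement of $\dint{B} c'\odot d\tau$ with $\dint{B} c\odot d\tau$ is immediate for every $B$; the paper instead shows only $\tau(B\cap N)=0$ for each $B$ with $\nu(B)\ll_{\odot}\infty$, and then handles $\odot$-infinite $\nu(B)$ by a separate appeal to semi-$\odot$-finiteness. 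Your reorganization is a mild but genuine simplification, since it avoids the case split on $\nu(B)$ in the final step.
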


\begin{proof}
In the case where $\odot$ is degenerate, the fact that $\nu$ be semi-$\odot$-finite implies $\nu = 0$; then the result is clear. 
So for the rest of the proof we assume that $\odot$ is non-degenerate. 
Let $F = \{ x \in E: c(x) \ll_{\odot} \infty \}$ and let $c_1 = c \odot 1_F$. 
By Theorem~\ref{lem:phi}, $F$ is a measurable set, so $c_1$ is $\mathrsfs{B}$-measurable (and $\odot$-finite-valued). 
Let us show that $c_1$ is still a density of $\nu$ with respect to $\tau$. Let $B \in \mathrsfs{B}$. Then 
\begin{equation}\label{eq:dec2}
\nu(B) = \dint{B} c \odot d\tau = \dint{B} c_1 \odot d\tau \oplus \dint{B \cap E \setminus F} c \odot d\tau. 
\end{equation}
We first assume that $\nu(B)$ is $\odot$-finite, and we show that $\tau(B \cap E \setminus F) = 0$. With Equation~(\ref{eq:dec2}) this will imply $\nu(B) =  \dint{B} c_1 \odot d\tau$. 
Let $F_{\odot}$ denote the set of $\odot$-finite elements. 
If $F_{\odot} = [0, \infty]$, then $E \setminus F$ is empty, so that $\tau(B \cap E \setminus F)  = 0$. 
Now suppose that $F_{\odot} = [0, \phi)$.  
Since $t \odot \tau(B \cap E \setminus F \cap \{ c > t \}) \leqslant \nu(B) \ll_{\odot} \infty$ for all $t>0$, we have by Lemma~\ref{lem:inv} that $t$ is $\odot$-finite or $\tau(B \cap E \setminus F \cap \{ c > t \}) > 0$, for all $t>0$. Consequently, 
\begin{equation}\label{eq:intphi}
\dint{B \cap E \setminus F} c \odot d\tau = \phi \odot \tau(B \cap E \setminus F). 
\end{equation} 
If $s := \tau(B \cap E \setminus F) > 0$, then by Equation~(\ref{eq:intphi}) and Theorem~\ref{lem:phi} this implies $\phi > \nu(B) \geqslant \phi \odot s \geqslant \phi$, a contradiction. Thus, we have again $\tau(B \cap E \setminus F) = 0$. 

If $\nu(B)$ is $\odot$-infinite, we use the fact that $\nu$ is semi-$\odot$-finite. We get  
$$
\nu(B) = \bigoplus_{A \subset B} \nu(A) = \bigoplus_{A \subset B} \dint{A} c_1 \odot d\tau 
\leqslant \dint{B} c_1 \odot d\tau,
$$
where the supremum is taken over $\{ A \in \mathrsfs{B} : A \subset B, \nu(A) \ll_{\odot} \infty \}$, 
so that $\nu(B) = \dint{B} c_1 \odot d\tau$, for all $B \in \mathrsfs{B}$. 
\end{proof}




\subsection{Principality and existence of a density}\label{subsec:existence}

A monotone set function $\mu$ on $\mathrsfs{B}$ is \textit{$\sigma$-principal} if, for every $\sigma$-ideal $\mathrsfs{I}$ of $\mathrsfs{B}$, there exists some $L \in \mathrsfs{I}$ such that $S \setminus L$ is $\mu$-negligible, for all $S \in \mathrsfs{I}$. Proposition~\ref{prop:principal} will justify this terminology. 
Sugeno and Murofushi \cite{Sugeno87} proved a Radon--Nikodym theorem for the idempotent $\odot$-integral when the dominating measure is $\sigma$-$\odot$-finite and $\sigma$-principal. 

\begin{theorem}[Sugeno--Murofushi]\label{sugeno-murofushi}
Let $\nu$, $\tau$ be $\sigma$-maxitive measures on $\mathrsfs{B}$. 
Assume that $\tau$ is $\sigma$-$\odot$-finite and $\sigma$-principal. 
Then $\nu \ll_{\odot} \tau$ if and only if there exists some $\mathrsfs{B}$-measurable map $c : E \rightarrow \overline{\reels}_+$ such that 
$$
\nu(B) = \dint{B} c\odot d\tau, 
$$
for all $B\in \mathrsfs{B}$. 
\end{theorem}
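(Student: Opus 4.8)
The ``only if'' direction is immediate: if $\nu(B) = \dint{B} c \odot d\tau$ for all $B \in \mathrsfs{B}$, then $\nu(B) = \bigoplus_{t} t \odot \tau(B \cap \{ c > t \}) \leqslant \bigoplus_{t} t \odot \tau(B) = \infty \odot \tau(B)$, the last equality by continuity of $s \mapsto s \odot \tau(B)$ on $(0, \infty]$, so $\nu \ll_{\odot} \tau$. For the converse I would first reduce to the case where $\tau$ is $\odot$-finite: choose a countable cover $(B_n)$ of $E$ with $\tau(B_n) \ll_{\odot} \infty$, disjointify it into $(E_n)$, and observe that each $\tau|_{E_n}$ is again $\sigma$-principal (a $\sigma$-ideal of $\mathrsfs{B} \cap E_n$ is a $\sigma$-ideal of $\mathrsfs{B}$) and still $\odot$-dominates $\nu|_{E_n}$. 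If the theorem is known when the dominating measure is $\odot$-finite, it yields densities $c_n$ on $E_n$, and then $c := \bigoplus_n c_n\,1_{E_n}$ is $\mathrsfs{B}$-measurable and, by $\sigma$-maxitivity of the idempotent $\odot$-integral and of $\nu$, is a density of $\nu$ with respect to $\tau$. From now on $\tau(E) \ll_{\odot} \infty$; note that then $\nu(B) \leqslant \infty \odot \tau(B)$ for all $B$, every $\tau$-negligible set is $\nu$-negligible, and $O(\tau(E)) = 0$.

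To construct $c$, for each positive rational $q$ put $\mathrsfs{I}_q := \{ B \in \mathrsfs{B} : \nu(A) \leqslant q \odot \tau(A) \text{ for every } A \in \mathrsfs{B} \text{ with } A \subseteq B \}$. This is a $\sigma$-ideal: heredity is built in, and stability under countable unions follows from the $\sigma$-maxitivity of $\nu$ and $\tau$ together with the fact that $q \odot (\cdot)$ commutes with countable suprema (a consequence of the continuity assumptions on $\odot$). The family $(\mathrsfs{I}_q)$ is nondecreasing. By $\sigma$-principality, there is for each $q$ some $L_q \in \mathrsfs{I}_q$ with $S \setminus L_q$ $\tau$-negligible for every $S \in \mathrsfs{I}_q$; replacing $L_q$ by $\widetilde{L}_q := \bigcup_{0 < p \leqslant q} L_p$ (union over rationals) we may assume that $q \mapsto \widetilde{L}_q$ is nondecreasing, that $\widetilde{L}_q \in \mathrsfs{I}_q$, and that $\widetilde{L}_q$ is still maximal in $\mathrsfs{I}_q$ modulo $\tau$-negligible sets. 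Finally set $c(x) := \inf\{ q \in \mathbb{Q},\ q > 0 : x \in \widetilde{L}_q \}$, with $\inf \emptyset = \infty$; then $c : E \rightarrow \overline{\reels}_+$ is $\mathrsfs{B}$-measurable, since $\{ c \geqslant t \} = E \setminus \bigcup_{0 < q < t} \widetilde{L}_q$ for every $t$, whence $\{ c > t \} = \bigcup_{q \in \mathbb{Q},\ q > t} \{ c \geqslant q \} \in \mathrsfs{B}$.

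It remains to check $\nu(B) = \dint{B} c \odot d\tau =: \nu_c(B)$ for all $B$. The plan is to establish, for every $A \in \mathrsfs{B}$ and every $s > 0$: (i) if $A \subseteq \{ c \leqslant s \}$ then $\nu(A) \leqslant s \odot \tau(A)$ and $\nu_c(A) \leqslant s \odot \tau(A)$; (ii) if $A \subseteq \{ c > s \}$ then $\nu_c(A) \geqslant s \odot \tau(A)$ and $\nu(A) \geqslant s \odot \tau(A)$; together with $\nu(A) = \nu_c(A) = 0$ when $A \subseteq \{ c = 0 \}$ (where $O(\tau(E)) = 0$ is used) and $\nu(A) = \nu_c(A) = \infty \odot \tau(A)$ when $A \subseteq \{ c = \infty \}$. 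Granting these, fix $B$ and a positive rational $\epsilon$ and decompose $B$ along $\{ c = 0 \}$, $\{ c = \infty \}$ and the level bands $\{ (k-1)\epsilon < c \leqslant k\epsilon \}$, $k \geqslant 1$. By $\sigma$-maxitivity, $\nu(B)$ and $\nu_c(B)$ are the suprema of their values on these pieces; on $\{ c = 0 \}$ and $\{ c = \infty \}$ these coincide, on the first band they are both $\leqslant \epsilon \odot \tau(E)$, and on the $k$-th band ($k \geqslant 2$) each lies in $[\,(k-1)\epsilon \odot \tau_k,\ k\epsilon \odot \tau_k\,]$, with $\tau_k := \tau(B \cap \{ (k-1)\epsilon < c \leqslant k\epsilon \})$. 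Hence $|\nu(B) - \nu_c(B)|$ is dominated by $\epsilon \odot \tau(E)$ together with the oscillation of the maps $s \mapsto s \odot a$, $0 \leqslant a \leqslant \tau(E)$, over intervals of length $\epsilon$; letting $\epsilon \downarrow 0$ this bound tends to $0$, because $\tau(E) \ll_{\odot} \infty$ and because each $s \mapsto s \odot a$ extends to a (uniformly) continuous function on the compact $[0, \infty]$, its value at $0$ being $O(a)$. Therefore $\nu(B) = \nu_c(B)$.

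The main obstacle is the lower bound in (ii): $\nu(A) \geqslant s \odot \tau(A)$ when $A \subseteq \{ c > s \}$. This is where $\sigma$-principality is used a second time. Since $A \cap \widetilde{L}_s = \emptyset$ (for rational $s$) and $\widetilde{L}_s$ is maximal in $\mathrsfs{I}_s$ modulo $\tau$-negligible sets, no $\tau$-non-negligible subset of $A$ lies in $\mathrsfs{I}_s$; hence, fixing a rational $s' < s$, every $\tau$-non-negligible subset of $A$ contains a set $A'$ with $\nu(A') > s' \odot \tau(A')$, and such an $A'$ is itself $\tau$-non-negligible by absolute continuity. Applying $\sigma$-principality to the $\sigma$-ideal generated by all such sets $A'$ together with the $\tau$-negligible sets, one obtains that $A$ equals, modulo a $\tau$-negligible set, a countable union of sets on each of which $\nu$ exceeds $s' \odot \tau$; then $\sigma$-maxitivity of $\nu$ and $\tau$ and the distributivity of $\odot$ over countable suprema give $\nu(A) \geqslant s' \odot \tau(A)$, and letting $s' \uparrow s$ concludes. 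The remaining parts of (i)--(ii) are straightforward from the definitions, the memberships $\{ c \leqslant s \} \in \mathrsfs{I}_q$ for rational $q > s$ and $\{ c > s \} \subseteq E \setminus \widetilde{L}_s$ for rational $s$, and the continuity of $\odot$; the only other subtlety is the uniformity in the limit $\epsilon \downarrow 0$, which is immediate for the Shilkret and Sugeno integrals and follows in general from the continuity axioms imposed on $\odot$.
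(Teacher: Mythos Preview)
The paper does not actually prove this theorem: its entire proof environment reads ``See \cite[Theorem~8.2]{Sugeno87} for the original proof, and \cite[Chapter~III]{Poncet11} for an alternative proof in the case where $\odot$ is the usual multiplication.'' There is therefore no argument in the paper to compare against; you have attempted to supply what the paper deliberately outsources to the literature, and your strategy (reduce to $\tau$ $\odot$-finite, form the $\sigma$-ideals $\mathrsfs{I}_q = \{B : \nu(A) \leqslant q \odot \tau(A)\text{ for all }A\subseteq B\}$, invoke $\sigma$-principality to get level sets $\widetilde L_q$, set $c(x)=\inf\{q: x\in\widetilde L_q\}$, and verify by a band decomposition) is essentially the Sugeno--Murofushi construction.

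The place where your sketch is not yet a proof is the final $\epsilon\downarrow 0$ passage. You bound $|\nu(B)-\nu_c(B)|$ by $\epsilon\odot\tau(E)$ together with $\sup_{k\geqslant 2}\bigl(k\epsilon\odot\tau_k-(k-1)\epsilon\odot\tau_k\bigr)$ and then assert this tends to $0$ because ``each $s\mapsto s\odot a$ extends to a (uniformly) continuous function on the compact $[0,\infty]$.'' Two issues: first, the axioms only give joint continuity on $(0,\infty)\times[0,\infty]$ and \emph{separate} continuity of $s\mapsto s\odot t$ at $s=\infty$, so the equicontinuity of the family $\{s\mapsto s\odot a : 0\leqslant a\leqslant \tau(E)\}$ near $s=\infty$ is not automatic and must be argued (e.g.\ via monotonicity in both variables); second, $\nu(B)$ may well equal $\infty$ (nothing prevents $\infty\odot\tau(E)=\infty$), in which case subtracting and bounding a difference is meaningless and you must instead show $\nu_c(B)=\infty$ directly. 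Both points are repairable---one can, for instance, compare $\nu$ and $\nu_c$ bandwise without ever forming differences, exploiting that $(k-1)\epsilon\odot\tau_k\leqslant \nu(B_k),\nu_c(B_k)\leqslant k\epsilon\odot\tau_k$ and that $\bigoplus_k (k-1)\epsilon\odot\tau_k$ and $\bigoplus_k k\epsilon\odot\tau_k$ have the same limit as $\epsilon\downarrow 0$---but as written the argument is a gesture rather than a proof for general~$\odot$.
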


\begin{proof}
See \cite[Theorem~8.2]{Sugeno87} 
for the original proof, and \cite[Chapter~III]{Poncet11} for an alternative proof in the case where $\odot$ is the usual multiplication. 
\end{proof}

If $\odot$ is the usual multiplication, the hypothesis of $\sigma$-$\odot$-finiteness of $\tau$ cannot be removed: consider for instance a finite set $E$, and let $\nu = \delta_{\#}$ and $\tau = \infty \cdot \delta_{\#}$ be $\sigma$-maxitive measures defined on the power set of $E$. Then $\tau$ is $\sigma$-principal and $\nu$ is absolutely continuous with respect to $\tau$, but $\nu$ never has a density with respect to $\tau$. 

After the article \cite{Sugeno87}, many authors have published results of Radon--Nikodym flavour for maxitive measures. This is the case of Agbeko \cite{Agbeko95}, Akian \cite{Akian99}, Barron et al.\ \cite{Barron00}, and Drewnowski \cite{Drewnowski09}. In some cases, the authors were not aware of the existence of the Shilkret integral. In \cite{Poncet11}, we explained why these results are already encompassed in the Sugeno--Murofushi theorem, and we also gave another proof of this theorem  with the help of order-theoretical arguments.

\section{The quotient space and the Radon--Nikodym property}\label{sec:quotient}

In this section, we characterize those $\sigma$-maxitive measures $\tau$ with the \textit{Radon--Nikodym property}, i.e.\ such that all $\sigma$-maxitive measures that are $\odot$-dominated by $\tau$ have a density with respect to $\tau$. At first, we shall introduce the quotient space associated with $\tau$. 

Let $\tau$ be a $\sigma$-maxitive measure on $\mathrsfs{B}$. On $\mathrsfs{B}$ we define an equivalence relation $\sim$ by $A \sim B$ if $A \cup N = B \cup N$, for some $\tau$-negligible subset $N$. We write $B^{\tau}$ for the equivalence class of $B \in \mathrsfs{B}$. The quotient set derived from $\sim$ is called the \textit{quotient space} associated with $\tau$, and denoted by $\mathrsfs{B}/\tau$. The quotient space can be equipped with the structure of a $\sigma$-complete lattice induced by the partial order $\leqslant$ defined by $A^{\tau} \leqslant B^{\tau}$ if $A \subset B \cup N$, for some $\tau$-negligible subset $N$. 

The next proposition, partly due to Sugeno and Murofushi, characterizes $\sigma$-principal $\sigma$-maxitive measures defined on a $\sigma$-algebra.  
A maxitive measure on $\mathrsfs{B}$ satisfies the \textit{countable chain condition} (or is \textit{CCC}) if each family of non-negligible pairwise disjoint elements of $\mathrsfs{B}$ is countable. 
(A CCC maxitive measure is sometimes called \textit{$\sigma$-decomposable}, but this terminology should be avoided, because of possible confusion with the notion of decomposability used e.g.\ by Weber \cite{Weber84}.) 

\begin{proposition}\label{prop:principal}
Let $\tau$ be a $\sigma$-maxitive measure on $\mathrsfs{B}$. The following conditions are equivalent:
\begin{enumerate}
	\item\label{principal1} $\tau$ is $\sigma$-principal, 
	\item\label{principal2} $\tau$ satisfies the countable chain condition, 
	\item\label{principal3} the quotient space $\mathrsfs{B}/\tau$ is $\sigma$-principal, in the sense that every $\sigma$-ideal of $\mathrsfs{B}/\tau$ is a principal ideal, 
	\item\label{principal4} there is some $\sigma$-principal $\sigma$-additive measure $m$ on $\mathrsfs{B}$ such that, for all $B \in \mathrsfs{B}$, $m(B) = 0 \Leftrightarrow \tau(B) = 0$. 
\end{enumerate}
\end{proposition}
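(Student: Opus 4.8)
The plan is to transport the statement to the quotient Boolean algebra $\mathrsfs{B}/\tau$. Since $\tau$ is $\sigma$-maxitive, a countable union of $\tau$-negligible sets is $\tau$-negligible, so the negligible sets form a $\sigma$-ideal of $\mathrsfs{B}$; hence the quotient map $B\mapsto B^\tau$ preserves complements and countable unions (if each $B_n$ is contained in $C$ modulo a negligible set, then so is $\bigcup_n B_n$), and, together with the $\sigma$-complete lattice structure recalled above, this makes $\mathrsfs{B}/\tau$ a $\sigma$-complete Boolean algebra and $B\mapsto B^\tau$ a $\sigma$-complete homomorphism onto it. To a $\sigma$-ideal $\mathrsfs{I}$ of $\mathrsfs{B}$ one attaches the $\sigma$-ideal $\mathrsfs{I}^\tau=\{B^\tau:B\in\mathrsfs{I}\}$ of $\mathrsfs{B}/\tau$ (downward closedness comes from intersecting a representative with a member of $\mathrsfs{I}$ lying above it), and to a $\sigma$-ideal $\mathrsfs{J}$ of $\mathrsfs{B}/\tau$ one attaches its preimage $\{B\in\mathrsfs{B}:B^\tau\in\mathrsfs{J}\}$. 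The key point is that $L\in\mathrsfs{I}$ satisfies ``$S\setminus L$ is $\tau$-negligible for every $S\in\mathrsfs{I}$'' exactly when $L^\tau$ is the greatest element of $\mathrsfs{I}^\tau$; running this through the two constructions shows that condition~(\ref{principal1}) is exactly the assertion that every $\sigma$-ideal of $\mathrsfs{B}/\tau$ is principal, that is, condition~(\ref{principal3}).

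I would then prove (\ref{principal2})$\,\Leftrightarrow\,$(\ref{principal3}), which is the transported form of the classical fact that a $\sigma$-complete Boolean algebra satisfies the countable chain condition if and only if each of its $\sigma$-ideals is principal. For (\ref{principal3})$\,\Rightarrow\,$(\ref{principal2}): a family of pairwise disjoint non-negligible sets of $\mathrsfs{B}$ maps to an antichain of nonzero elements of $\mathrsfs{B}/\tau$ of the same cardinality (if $B_i\cap B_j=\emptyset$ and $B_i^\tau=B_j^\tau$, then $B_i\cup B_j$, being a symmetric difference of representatives of one class, is negligible, hence so is $B_i$), and an uncountable such antichain generates a $\sigma$-ideal with no greatest element, since such an element would have to dominate every generator while lying below a countable subjoin of them. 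For (\ref{principal2})$\,\Rightarrow\,$(\ref{principal3}): given a $\sigma$-ideal $\mathrsfs{J}$ of $\mathrsfs{B}/\tau$, pass to its preimage $\tilde{\mathrsfs{J}}$ in $\mathrsfs{B}$, pick by Zorn's lemma a maximal subfamily $\{B_k\}$ of $\tilde{\mathrsfs{J}}$ consisting of pairwise disjoint non-negligible sets, note that (\ref{principal2}) forces it to be countable, put $L=\bigcup_k B_k\in\tilde{\mathrsfs{J}}$, and observe that maximality makes $B\setminus L$ negligible for every $B\in\tilde{\mathrsfs{J}}$; hence $\mathrsfs{J}$ is the principal ideal generated by $L^\tau$.

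It remains to bring in condition~(\ref{principal4}). The implication (\ref{principal4})$\,\Rightarrow\,$(\ref{principal1}) is immediate, because $\sigma$-principality of a monotone set function depends only on which sets are negligible, and $\tau$ and $m$ have the same negligible sets. For (\ref{principal1})$\,\Rightarrow\,$(\ref{principal4}) I would exhibit $m$ explicitly as the $\overline{\reels}_+$-valued set function with $m(B)=0$ when $\tau(B)=0$ and $m(B)=\infty$ otherwise: it is monotone, has the same negligible sets as $\tau$, and is $\sigma$-additive, since a countable disjoint union has $m$-measure $0$ exactly when each piece does, which by $\sigma$-maxitivity of $\tau$ is exactly when the union is $\tau$-null, so in either case $m$ of the union equals the sum of the $m$ of the pieces. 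As a monotone set function sharing its negligible sets with $\tau$, it is $\sigma$-principal precisely because $\tau$ is.

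The argument is essentially formal once the quotient is set up, so the only point requiring genuine care is that setup: checking that $\mathrsfs{B}/\tau$ is a $\sigma$-complete Boolean algebra carrying a $\sigma$-complete quotient homomorphism, and that $\mathrsfs{I}\mapsto\mathrsfs{I}^\tau$ sends $\sigma$-ideals to $\sigma$-ideals and a ``$\tau$-essentially greatest'' element to a top element. After that, the engine of the proof --- the countable chain condition makes a maximal pairwise-disjoint family countable, so its supremum tops the associated $\sigma$-ideal --- is the standard maximal-antichain argument and raises no real difficulty.
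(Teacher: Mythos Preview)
Your argument is correct and follows the same underlying ideas as the paper: the correspondence between $\sigma$-ideals of $\mathrsfs{B}$ and of $\mathrsfs{B}/\tau$ for (\ref{principal1})$\Leftrightarrow$(\ref{principal3}), and the maximal-antichain argument via Zorn's lemma for the link with CCC. Two points of comparison are worth noting. First, the paper proves (\ref{principal1})$\Rightarrow$(\ref{principal2}) directly in $\mathrsfs{B}$ (taking the $\sigma$-ideal generated by the antichain, choosing $L$ in it as a countable union of antichain members, and showing any further member is negligible) and merely cites Sugeno--Murofushi for (\ref{principal2})$\Rightarrow$(\ref{principal1}); you instead route both directions through (\ref{principal3}) and the quotient Boolean algebra, which is a clean packaging but the same engine. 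Second, and more interestingly, for (\ref{principal1})$\Rightarrow$(\ref{principal4}) the paper constructs $m$ as the \emph{disjoint variation} $m(B)=\bigoplus_\pi\sum_{B'\in\pi}\tau(B\cap B')$, the least $\sigma$-additive measure dominating $\tau$, whereas your $0/\infty$ measure is strictly simpler and suffices for the statement as written; the paper's choice yields a finer object (in particular, finite when $\tau$ is) but is not needed here.
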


\begin{proof}
(\ref{principal4}) $\Rightarrow$ (\ref{principal1}) This implication is clear. 

(\ref{principal1}) $\Rightarrow$ (\ref{principal2}) 
Assume that $\tau$ is $\sigma$-principal, and let $\mathrsfs{A}$ be a family of non-negligible pairwise disjoint elements of $\mathrsfs{B}$. Let $\mathrsfs{I}$ be the $\sigma$-ideal generated by $\mathrsfs{A}$, and let $L \in \mathrsfs{I}$ such that $\tau(I \setminus L) = 0$ for all $I \in \mathrsfs{I}$. We can choose $L$ of the form $L = \bigcup_{n \in \mathbb{N}} A_n$, with $A_n \in \mathrsfs{A}$ for all $n$. 
Now let us show that $\mathrsfs{A} = \{ A_n : n \in \mathbb{N} \}$, which will prove that $\mathrsfs{A}$ is countable. So let $A \in \mathrsfs{A}$, and assume that $A \neq A_n$ for all $n$. Then $A \cap A_n = \emptyset$ for all $n$, i.e.\ $A \subset E \setminus L$. Moreover, the definition of $L$ implies $\tau(A \setminus L) = 0$, so that $\tau(A) = 0$, a contradiction. 

(\ref{principal2}) $\Rightarrow$ (\ref{principal1}) 
This was proved by Sugeno and Murofushi \cite[Lemma~4.2]{Sugeno87} with the help of Zorn's lemma. 

(\ref{principal1}) $\Rightarrow$ (\ref{principal4}) Let $m$ be the map defined on $\mathrsfs{B}$ by
$$
m(B) = \bigoplus_{\pi} \sum_{B' \in \pi} \tau(B\cap B'),
$$
where the supremum is taken over the set of finite $\mathrsfs{B}$-partitions $\pi$ of $E$. Then $m$, called the \textit{disjoint variation} of $\tau$, is the least $\sigma$-additive measure greater than $\tau$ (see e.g.\ Pap \cite[Theorem~3.2]{Pap95}). 
Moreover, $\tau(B) > 0$ if and only if $m(B) > 0$. 
Let us show that $m$ is $\sigma$-principal. If $\mathrsfs{I}$ is a $\sigma$-ideal of $\mathrsfs{B}$, there exists some $L \in \mathrsfs{I}$ such that $\tau(B \backslash L) = 0$ for all $B \in \mathrsfs{I}$. If $B \in \mathrsfs{I}$, then $\tau(B\cap B' \backslash L) = 0$ for all $B' \in \mathrsfs{B}$, since $B\cap B' \in \mathrsfs{I}$. Hence we have $m(B \backslash L) = 0$. 

(\ref{principal3}) $\Rightarrow$ (\ref{principal1}) Let $\mathrsfs{I}$ be a $\sigma$-ideal of $\mathrsfs{B}$, and let $I = \{ B^{\tau} : B \in \mathrsfs{I} \}$. Then $I$ is closed under countable suprema, and if $A^{\tau} \leqslant B^{\tau}$ with $B \in \mathrsfs{I}$, then $A \subset B \cup N$ for some negligible subset $N \in \mathrsfs{B}$. Hence $A \cap (E\setminus N) \subset B$, so that $A \cap (E\setminus N) \in \mathrsfs{I}$. Since $A \cap (E\setminus N) \sim A$, this implies that $A^{\tau} \in I$. Thus, $I$ is a $\sigma$-ideal of $\mathrsfs{B}/\tau$. Since $\mathrsfs{B}/\tau$ is $\sigma$-principal, there is some $L \in \mathrsfs{I}$ such that $B^{\tau} \in I$ if and only if $B^{\tau} \leqslant L^{\tau}$. We deduce that $\tau(B \setminus L) = 0$ for all $B\in \mathrsfs{I}$, which proves that $\tau$ is $\sigma$-principal. 

(\ref{principal1}) $\Rightarrow$ (\ref{principal3}) Let $I$ be a $\sigma$-ideal of $\mathrsfs{B}/\tau$. Then $\mathrsfs{I} = \{ B \in \mathrsfs{B} : B^{\tau} \in I \}$ is a $\sigma$-ideal of $\mathrsfs{B}$. Since $\tau$ is $\sigma$-principal, there is some $L \in \mathrsfs{I}$ such that $\tau(B \setminus L) = 0$ for all $B \in \mathrsfs{I}$. Then $B^{\tau} \in I$ if and only if $B^{\tau} \leqslant L^{\tau}$, i.e.\ $I$ is a principal ideal. 
\end{proof}

Following Segal \cite{Segal51}, a monotone set function $\mu$ on $\mathrsfs{B}$ is \textit{localizable} if, for every $\sigma$-ideal $\mathrsfs{I}$ of $\mathrsfs{B}$, there exists some $L \in \mathrsfs{B}$ such that 
\begin{enumerate}
	\item $S \setminus L$ is $\mu$-negligible, for all $S \in \mathrsfs{I}$, 
	\item if there is some $B \in \mathrsfs{B}$ such that $S \setminus B$ is $\mu$-negligible for all $S \in \mathrsfs{I}$, then $L \setminus B$ is $\mu$-negligible. 
\end{enumerate}
In this case, $\mathrsfs{I}$ is said to be \textit{localized} in $L$. It is clear that a monotone set function is localizable if and only if the associated quotient space is a complete lattice. 
Note also that localizability is weaker than $\sigma$-principality. 



Here comes the characterization of the Radon--Nikodym property.

\begin{theorem}
Given a non-degenerate pseudo-multiplication $\odot$, a $\sigma$-max\-itive measure $\tau$ on $\mathrsfs{B}$ satisfies the Radon--Nikodym property with respect to the idempotent $\odot$-integral if and only if $\tau$ is $\sigma$-$\odot$-finite and $\sigma$-principal. 
\end{theorem}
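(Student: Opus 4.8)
The Sugeno--Murofushi theorem (Theorem~\ref{sugeno-murofushi}) already gives one direction: if $\tau$ is $\sigma$-$\odot$-finite and $\sigma$-principal, then every $\sigma$-maxitive measure $\nu$ with $\nu \ll_{\odot} \tau$ has a density. So the whole task is the converse: assuming $\tau$ has the Radon--Nikodym property, I must show $\tau$ is $\sigma$-$\odot$-finite and $\sigma$-principal. I would establish the two conditions separately, each by constructing an explicit $\sigma$-maxitive measure $\nu \ll_{\odot} \tau$ which fails to have a density unless the condition holds; then the Radon--Nikodym property forces that condition.

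\textbf{$\sigma$-$\odot$-finiteness.}
Here I would exploit the example already flagged in the introduction: the measure $\delta_{\#}$ on $\mathrsfs{B}$ defined by $\delta_{\#}(B) = 1_{\odot}$ for $B \neq \emptyset$ and $\delta_{\#}(\emptyset) = 0$ (using $1_{\odot}$ in place of $1$, which is the natural normalization for a general $\odot$). Since $\odot$ is non-degenerate, $1_{\odot}$ is $\odot$-finite, so $\delta_{\#}$ is $\odot$-finite, and one checks $\delta_{\#} \ll_{\odot} \tau$ trivially (whenever $\tau(B) \ll_{\odot} \infty$, we have $\delta_{\#}(B) = 1_{\odot} \leqslant \infty \odot \tau(B)$ as long as $B \neq \emptyset$ forces $\tau(B) > 0$; if $\tau(B) = 0$ but $B \neq \emptyset$ this fails, so I would instead restrict to a carefully chosen $\tau$-negligible-free piece, or rather note that what I actually need is a measure dominated by $\tau$). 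A cleaner route: if $\tau$ is \emph{not} $\sigma$-$\odot$-finite, I would consider $\nu = \infty \odot \tau$, which is $\sigma$-maxitive and satisfies $\nu \ll_{\odot} \tau$ (vacuously on $\odot$-finite sets where one must check $\infty \odot \tau(B) \leqslant \infty \odot \tau(B)$). If $\nu$ had a density $c$, then $\nu(B) = \dint{B} c \odot d\tau$, and I would derive a contradiction from the failure of $\sigma$-$\odot$-finiteness by showing the integral of any measurable $c$ against $\tau$ can be controlled on a cover, whereas $\infty \odot \tau$ cannot be so represented when $\tau(E)$ is $\odot$-infinite everywhere on an uncountable disjoint family. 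The precise choice of $\nu$ is the delicate part; I expect the honest argument uses the $\delta_{\#}$-type measure together with the observation from the introduction that $\delta_{\#}$ is not $\sigma$-principal in general, simultaneously handling part of the principality obstruction.

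\textbf{$\sigma$-principality.}
By Proposition~\ref{prop:principal}, $\sigma$-principality is equivalent to the countable chain condition (CCC). So suppose $\tau$ is \emph{not} CCC: there is an uncountable family $\{A_i\}_{i \in I}$ of pairwise disjoint non-$\tau$-negligible sets in $\mathrsfs{B}$. I would then build a $\sigma$-maxitive measure $\nu$ supported on $\bigcup_i A_i$ that ``sees'' the uncountable family — for instance $\nu(B) = 1_{\odot}$ if $B \cap A_i \neq \emptyset$ for some $i$, and $\nu(B) = 0$ otherwise; one must verify $\nu$ is genuinely $\sigma$-maxitive (it is, since a countable union meeting some $A_i$ means one of the pieces does) and that $\nu \ll_{\odot} \tau$. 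Then I would show $\nu$ cannot have a density: a density $c$ would satisfy $\nu(A_i) = 1_{\odot} = \dint{A_i} c \odot d\tau$ for every $i$, forcing $c$ to be ``$\odot$-positive'' on a non-negligible part of each $A_i$; but then, passing to the quotient space $\mathrsfs{B}/\tau$, the sets $\{c > t\}$ for rational $t$ generate, via countably many levels, a structure that cannot separate uncountably many disjoint classes — concretely, for some fixed $t_0 > 0$ with $t_0 \odot 1_\odot \geqslant$ something useful, the sets $A_i \cap \{c > t_0\}$ would have to be non-negligible for uncountably many $i$, yet they are pairwise disjoint, and I would contradict this by noting $\nu$ of their union over any countable subfamily is controlled. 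The cleanest contradiction: $c$ takes values in a countable dense set up to $\tau$-a.e.\ adjustments is false in general, so instead I argue that $\{x : c(x) > 0\}$ meets each $A_i$ non-negligibly, the level sets $\{c > 1/n\}$ cover this up to negligible, hence by pigeonhole some $\{c > 1/n_0\}$ meets uncountably many $A_i$ non-negligibly; but these intersections are disjoint, so taking $\nu'(B) = \dint{B} (c \wedge (1/n_0)) \odot d\tau \leqslant \nu(B)$ and evaluating on a countable subfamily union versus the whole uncountable union yields $\bigoplus$ of countably many equal positive values against an uncountable sup, and since all are equal this is \emph{not} itself the contradiction — the real contradiction is that $\nu$ applied to the full uncountable union equals $1_\odot$ but is \emph{not} the supremum over countable subfamilies unless CCC holds, whereas $B \mapsto \dint{B} c \odot d\tau$ \emph{is} such a supremum because $\tau$-negligibility of $A \setminus L$ propagates. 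This last point — that any density-integral automatically behaves ``$\sigma$-principally'' — is exactly the content I would isolate as a lemma, and it is the crux.

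\textbf{Main obstacle.}
The hard part is the $\sigma$-principality direction: constructing the separating measure $\nu$ and, above all, proving rigorously that the existence of a density forces CCC. The key insight to push through is that the map $B \mapsto \dint{B} c \odot d\tau$ inherits from $\tau$ the property that its value on any $\sigma$-ideal-generated set is attained on a countable subfamily modulo negligible sets — precisely because the level sets $\{c > t\}$ reduce everything to $\tau$ itself, and one can then appeal to the fact that $\tau$, being dominated into a $\sigma$-maxitive framework, together with the explicit $\nu$ not having this property unless $\tau$ is CCC, closes the loop.
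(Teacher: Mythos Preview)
Your strategy for $\sigma$-principality has a genuine gap. The measure $\nu(B) = 1_{\odot}$ if $B \cap A_i \neq \emptyset$ for some $i$ does \emph{not} satisfy $\nu \ll_{\odot} \tau$: take any $\tau$-negligible $B$ that still meets some $A_i$ (such sets exist unless every $A_i$ is an atom with no null subsets), and you get $\nu(B) = 1_{\odot}$ while $\infty \odot \tau(B) = 0$. Replacing the condition by ``$\tau(B \cap A_i) > 0$ for some $i$'' repairs absolute continuity but destroys $\sigma$-maxitivity. More seriously, your closing ``key lemma'' --- that a density-integral $B \mapsto \dint{B} c \odot d\tau$ automatically attains its values on countable subfamilies modulo negligible sets --- is circular: this is exactly $\sigma$-principality of $\tau$, which is what you are trying to prove. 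With $c \equiv 1_{\odot}$ the density-integral \emph{is} $\tau$, so the integral cannot have better localization behaviour than $\tau$ itself.

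Your $\sigma$-$\odot$-finiteness argument is also not there. You correctly abandon $\delta_{\#}$, but $\nu = \infty \odot \tau$ \emph{does} have a density (take $c$ constant, large), so it gives no obstruction. In the paper the two conclusions are not independent: $\sigma$-principality is established \emph{first} (via a two-step construction --- one auxiliary $\nu$ to get localizability, then a second $\nu$ built as $\nu(B) = \inf\{t > 0 : B \in \mathrsfs{J}_t\}$ from the nested $\sigma$-ideals $\mathrsfs{J}_t = \{ I \cup B : I \in \mathrsfs{I},\, \tau(B) \leqslant t\}$ to upgrade to $\sigma$-principality), and only \emph{then} is $\sigma$-$\odot$-finiteness deduced, through intermediate claims that $\tau$ has no ``$\odot$-spot'', that $\tau(E) \leqslant \phi$, and that $\tau$ is semi-$\odot$-finite, finally invoking the already-proved $\sigma$-principality to extract a countable $\odot$-finite cover. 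Trying to prove the two conditions separately by parallel counterexamples, as you do, misses this dependency and is the main structural obstacle in your plan.
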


\begin{proof}
The `if' part of this theorem is due to Sugeno and Murofushi \cite[Theorem~8.2]{Sugeno87}. 
The `only if' part 
 is proved in six steps. Let $\tau$ be a $\sigma$-maxitive measure satisfying the Radon--Nikodym property. 

\textit{Claim 1: $\tau$ is localizable. }

Let $\mathrsfs{I}$ be a $\sigma$-ideal of $\mathrsfs{B}$, and let $\nu$ be defined on $\mathrsfs{B}$ by 
$$
\nu(B) = \bigoplus_{I \in \mathrsfs{I}} \tau(B \cap I). 
$$
Then $\nu$ is a $\sigma$-maxitive measure on $\mathrsfs{B}$, $\odot$-absolutely continuous with respect to $\tau$, hence we can write 
$$
\nu(B) = \dint{B} c\odot d\tau, 
$$
for some $\mathrsfs{B}$-measurable map $c : E \rightarrow \overline{\reels}_+$. Defining $L = \{ c \neq 0 \} \in \mathrsfs{B}$, one can see that $\mathrsfs{I}$ is localized in $L$. 

\textit{Claim 2: $\tau$ is $\sigma$-principal. }

Let $\mathrsfs{I}$ be a $\sigma$-ideal in $\mathrsfs{B}$, and let $L \in \mathrsfs{B}$ localizing $\mathrsfs{I}$ with respect to $\tau$. 
We define the non-decreasing family of $\sigma$-ideals $(\mathrsfs{J}_t)_{t > 0}$ by 
$$
\mathrsfs{J}_t = \{ I \cup B : I \in \mathrsfs{I}, B \in \mathrsfs{B}, \tau(B) \leqslant t \}. 
$$
Now let $\nu$ be the map defined on $\mathrsfs{B}$ by 
$$
\nu(B) = \inf \{ t > 0 : B \in \mathrsfs{J}_t \}, 
$$
for all $B \in \mathrsfs{B}$. 
Then, by a result due to Nguyen et al.\ \cite[§~2]{Nguyen97} we know that $\nu$ is a $\sigma$-maxitive measure (see also \cite{Poncet12b}). 
Moreover, $\nu(B) \leqslant \tau(B)$ for all $B\in\mathrsfs{B}$, so that $\nu \ll_{\odot} \tau$, hence we can write 
\begin{equation}\label{eqn:intnutaubis}
\nu(\cdot) = \dint{\cdot} c\odot d\tau, 
\end{equation}
for some $\mathrsfs{B}$-measurable map $c : E \rightarrow \overline{\reels}_+$. 
If $I \in \mathrsfs{I}$, then $\nu(I) = 0$, hence, using Equation~(\ref{eqn:intnutaubis}) and the fact that $\odot$ has no zero divisors, $\tau(I \cap \{ c > t\}) = 0$, for all $t > 0$. 
This implies that $\tau(I \setminus \{ c = 0\}) = 0$, for all $I \in \mathrsfs{I}$. By definition of $L$, we deduce that $\tau(L \setminus \{ c =0\}) = 0$. Therefore, $\nu(L) = \nu(L \setminus \{ c = 0 \}) \oplus \nu(L \cap \{ c=0\}) = 0$. 
The definition of $\nu$ implies that $L \in \mathrsfs{J}_q$ for all $q \in \mathbb{Q}^*_+$. Thus, we can write $L = I_q \cup B_q$ for all $q \in \mathbb{Q}^*_+$, with $I_q \in \mathrsfs{I}$ and $\tau(B_q) \leqslant q$. Now, one can check that $L = I_0 \cup N_0$ with $I_0 := \bigcup_{q \in \mathbb{Q}^*_+} I_q$ and $N_0 := \bigcap_{q \in \mathbb{Q}^*_+} B_q$. 
Since $\tau(N_0) = 0$, it appears that we have found $I_0 \in \mathrsfs{I}$ such that $\nu(I \setminus I_0) = 0$, for all $I \in \mathrsfs{I}$, so we have proved that $\tau$ is $\sigma$-principal. 

\textit{Claim 3: $\tau$ has no $\odot$-spot. }

We define a \textit{$\odot$-spot} of $\tau$ as an element $B_0$ of $\mathrsfs{B}$ such that $\tau(B_0)$ is $\odot$-infinite and $\tau(A)$ is either zero or $\odot$-infinite for all $A \subset B_0$. 
Now assume that $\tau$ has such a $\odot$-spot $B_0$. 
If $\mathrsfs{I}$ is the $\sigma$-ideal of $\mathrsfs{B}$ generated by $\{ B \in \mathrsfs{B} : \tau(B) \ll_{\odot} \infty \}$, we define the $\sigma$-maxitive measure $\nu$ by $\nu(B) = 0$ if $B \in \mathrsfs{I}$, and $\nu(B) = 1_{\odot}$ otherwise, where $1_{\odot}$ is the left identity element of $\odot$. 
Since $\nu \ll_{\odot} \tau$, there exists some $\mathrsfs{B}$-measurable map $f : E \rightarrow \overline{\reels}_+$ such that $\nu(B) = \dint{B} f \odot d\tau$, for all $B \in \mathrsfs{B}$. Then 
\begin{equation}\label{eqn:unzerobis}
1_{\odot} = \nu(B_0) = \bigoplus_{t > 0} t \odot g(t), 
\end{equation} 
where $g(t) := \tau(B_0 \cap \{ f > t \})$. Thus, $t \odot g(t) \leqslant 1_{\odot}$, so $g(t)$ is $\odot$-finite by Lemma~\ref{lem:inv}, for all $t> 0$. This implies that $g(t) = 0$ for all $t > 0$ by definition of $B_0$, which contradicts Equation~(\ref{eqn:unzerobis}). 

\textit{Claim 4: $\tau(E) \leqslant \phi$, where $\phi$ is the supremum of the set of $\odot$-finite elements. }

We can suppose that $\phi < \infty$. Recall that, since $\odot$ is supposed to be non-degenerate, we have $\phi > 0$. Thanks to the Radon--Nikodym property, we have 
$$
\min(\tau(B), \phi) = \dint{B} c \odot d\tau, 
$$
for all $B \in \mathrsfs{B}$, for some $\mathrsfs{B}$-measurable map $c  : E \rightarrow \overline{\mathbb{R}}_+$. 
Assume that $\tau(E) > \phi$. Since $\tau(\{ c = 0 \}) = 0 < \phi$, we deduce that $\tau(E) = \tau(\{ c > 0 \}) > \phi$. This implies that $\tau(\{ c > t_0 \}) > \phi$ for some $0 < t_0 < \phi$. Thus, 
$$
\phi = \min(\tau(E), \phi) \geqslant t_0 \odot \tau(\{ c > t_0 \}) \geqslant t_0 \odot \phi.
$$ 
By Theorem~\ref{lem:phi}, $t_0 \odot \phi = \phi$, so that $t_0 \odot \tau(\{ c > t_0 \}) = \phi$. But this last identity is not possible by Corollary~\ref{lem:dec}. This contradiction shows that $\tau(E) \leqslant \phi$. 

\textit{Claim 5: $\tau$ is semi-$\odot$-finite. }

Let $\nu$ be the map defined on $\mathrsfs{B}$ by $\nu(B) = \bigoplus_{A \subset B} \tau(A)$, where the supremum is taken over $\{ A \in \mathrsfs{B} : A \subset B, \tau(A) \ll_{\odot} \infty \}$. Then $\nu$ is a $\sigma$-maxitive measure such that $\nu(B) = \tau(B)$ whenever $\tau(B)$ is $\odot$-finite. 
In particular,  $\nu \ll_{\odot} \tau$. 
Assume that $\nu(B_1) < \tau(B_1)$, for some $B_1 \in \mathrsfs{B}$. 
Let $c : E \rightarrow \overline{\reels}_+$ be a $\mathrsfs{B}$-measurable map such that Equation~(\ref{eqn:intnutaubis}) is satisfied, 
and let $A_t = B_1 \cap \{ c > t\}$. Using Claim~4, we have $\phi > \nu(B_1) \geqslant t\odot\tau(A_t)$, where $\phi$ is the supremum of the set of $\odot$-finite elements. 
So, by Lemma~\ref{lem:inv}, $\tau(A_t)$ is $\odot$-finite, for all $t > 0$. 
Moreover, since $\nu(\{c = 0 \}) = 0$ and $\tau$ has no $\odot$-spot by Claim~3, we deduce that $\tau(\{ c = 0 \})$ is $\odot$-finite. Thus, $\nu(\{ c = 0 \}) = \tau(\{ c = 0 \}) = 0$, so $\tau(B_1) = \tau(B_1 \cap \{c > 0 \}) = \bigoplus_{q \in \mathbb{Q}^*_+} \tau(A_q)$, and the definition of $\nu$ implies $\tau(B_1) \leqslant \nu(B_1)$, a contradiction. 

\textit{Claim 6: $\tau$ is $\sigma$-$\odot$-finite. }

Let $\mathrsfs{I}$ be the $\sigma$-ideal generated by all $A \in \mathrsfs{B}$ such that $\tau(A)$ be $\odot$-finite. Since $\tau$ is $\sigma$-principal, there is some $L \in \mathrsfs{I}$ such that $\tau(A \setminus L) = 0$ for all $A \in \mathrsfs{I}$. We can choose $L$ of the form $L = \bigcup_{n \geqslant 1} A_n$, with $\tau(A_n)$ $\odot$-finite for all $n$. Since $\tau$ is semi-$\odot$-finite, $\tau(B) = \tau(B \cap L)$ for all $B$. 
In particular, $\tau(E \setminus L) = 0$, so $E$ is equal to the union of the family $(A_n)_{n \in \mathbb{N}}$ with $A_0 := E\setminus L$, and $\tau(A_n)$ is $\odot$-finite for all $n$. 
This proves that $\tau$ is $\sigma$-$\odot$-finite. 
\end{proof}

\begin{corollary}
Let $\tau$ be a $\sigma$-maxitive measure on $\mathrsfs{B}$. Then $\tau$ satisfies the Radon--Nikodym property with respect to the Shilkret integral if and only if $\tau$ is $\sigma$-finite and $\sigma$-principal. 
\end{corollary}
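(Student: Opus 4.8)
The plan is to obtain this corollary as the special case $\odot = \times$ of the main Theorem, where $\times$ denotes the usual multiplication on $\overline{\reels}_+$, extended by the conventions $0 \cdot \infty = \infty \cdot 0 = 0$ and $s \cdot \infty = \infty$ for every $s \in (0,\infty)$. All the work consists in checking that, under these conventions, $\times$ is a non-degenerate pseudo-multiplication in the sense of \S\ref{par:odot} and that the notions attached to $\times$ reduce to the classical ones.

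First I would verify the axioms. Associativity, monotonicity in both arguments, the fact that $1$ is a (left) identity, the absence of zero divisors, and the annihilator property of $0$ are all immediate from the conventions above; continuity on $(0,\infty) \times [0,\infty]$ and continuity of $s \mapsto s \cdot t$ on $(0,\infty]$ for each fixed $t \in \overline{\reels}_+$ are equally straightforward (the troublesome point $0 \cdot \infty$ has been excluded from both domains). For non-degeneracy I would compute $O(1) = \inf_{s > 0} s \cdot 1 = 0$, so that $1_{\times} = 1$ is $\times$-finite. Next I would identify the $\times$-finite elements: for $t \in \overline{\reels}_+$ one has $O(t) = \inf_{s > 0} s \cdot t$, which equals $0$ when $t < \infty$ (let $s \downarrow 0$) and equals $\infty$ when $t = \infty$. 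Hence the set $F_{\times}$ of $\times$-finite elements is exactly $[0,\infty)$, so that ``$t \ll_{\times} \infty$'' means simply ``$t < \infty$''; in the dichotomy of Theorem~\ref{lem:phi} this is the case $F_{\times} = [0,\phi)$ with $\phi = \infty$. Consequently, for a monotone set function, being $\sigma$-$\times$-finite is being $\sigma$-finite in the usual sense.

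It then remains to observe that the idempotent $\times$-integral is, by definition, the Shilkret integral, so that a $\sigma$-maxitive measure $\tau$ satisfies the Radon--Nikodym property with respect to the Shilkret integral precisely when it satisfies it with respect to the idempotent $\times$-integral. Applying the main Theorem to the non-degenerate pseudo-multiplication $\times$ then yields the equivalence: $\tau$ has that property if and only if it is $\sigma$-finite and $\sigma$-principal. There is no genuine obstacle here; the only point requiring care is the bookkeeping around the convention $0 \cdot \infty = 0$, which is what makes $0$ an annihilator for $\times$ and pins down $\sup F_{\times} = \infty$, and hence keeps us in the non-degenerate regime throughout.
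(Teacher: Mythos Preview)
Your proposal is correct and follows exactly the approach implicit in the paper: the corollary is stated without proof there, as an immediate specialization of the main Theorem to $\odot = \times$, and your verification that $\times$ is a non-degenerate pseudo-multiplication with $F_{\times} = [0,\infty)$ (so that $\sigma$-$\times$-finite coincides with $\sigma$-finite) is precisely the bookkeeping needed to make that specialization rigorous.
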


\begin{corollary}
Let $\tau$ be a $\sigma$-maxitive measure on $\mathrsfs{B}$. Then $\tau$ satisfies the Radon--Nikodym property with respect to the Sugeno integral if and only if $\tau$ is $\sigma$-principal. 
\end{corollary}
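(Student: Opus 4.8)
The plan is to obtain this corollary as an immediate specialization of the main theorem to the case $\odot = \wedge$, the minimum operation on $\overline{\reels}_+$, for which the idempotent $\odot$-integral is by definition the Sugeno integral. So the first task is to verify that $\wedge$ really is a non-degenerate pseudo-multiplication in the sense of \S\ref{par:odot}. Associativity, monotonicity in both arguments, the required continuity properties, the fact that $0$ is an annihilator, and the absence of zero divisors (indeed $s \wedge t = 0$ forces $0 \in \{s,t\}$) are all immediate from the definition of the minimum; the left identity is $1_{\wedge} = \infty$, since $\infty \wedge t = t$ for every $t$.

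Next I would compute the associated map $O$. For $\odot = \wedge$ one has $O(t) = \inf_{s > 0}(s \wedge t) = 0$ for every $t \in \overline{\reels}_+$, so every element of $\overline{\reels}_+$ is $\wedge$-finite; in particular $O(1_{\wedge}) = O(\infty) = 0$, which shows that $\wedge$ is non-degenerate, and $F_{\wedge} = [0, \infty]$. As a consequence, $\mu(E) \ll_{\wedge} \infty$ holds trivially for every monotone set function $\mu$ on $\mathrsfs{B}$, so every $\sigma$-maxitive measure on $\mathrsfs{B}$ is $\wedge$-finite, hence a fortiori $\sigma$-$\wedge$-finite. The conjunction ``$\sigma$-$\wedge$-finite and $\sigma$-principal'' therefore collapses to ``$\sigma$-principal''.

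Finally I would invoke the main theorem with $\odot = \wedge$: since $\wedge$ is a non-degenerate pseudo-multiplication, $\tau$ has the Radon--Nikodym property with respect to the idempotent $\wedge$-integral if and only if $\tau$ is $\sigma$-$\wedge$-finite and $\sigma$-principal, that is, if and only if $\tau$ is $\sigma$-principal; and the idempotent $\wedge$-integral is precisely the Sugeno integral. There is essentially no obstacle here beyond bookkeeping: the only point deserving care is the explicit identification of $F_{\wedge}$ and of the relation $\nu \ll_{\wedge} \tau$, which, because $\infty \wedge \tau(B) = \tau(B)$ and every $\tau(B)$ is $\wedge$-finite, amounts simply to $\nu \leqslant \tau$ pointwise, so that the hypotheses of the main theorem are met verbatim. (The preceding corollary for the Shilkret integral is obtained in the same way by taking $\odot = \times$, for which $F_{\times} = [0, \infty)$ and $\sigma$-$\times$-finiteness is ordinary $\sigma$-finiteness.)
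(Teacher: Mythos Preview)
Your argument is correct and matches the paper's approach: the paper states this result as an immediate corollary of the main theorem without giving a separate proof, and your verification that $\wedge$ is a non-degenerate pseudo-multiplication with $F_{\wedge} = [0,\infty]$ (so that $\sigma$-$\wedge$-finiteness is automatic) is exactly the bookkeeping needed to justify that specialization.
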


\section{Conclusion and perspectives}\label{seccon}

In this work, we derived a converse statement to the Sugeno--Murofushi theorem, i.e.\ we characterized those $\sigma$-maxitive measures satisfying the Radon--Nikodym property with respect to the idempotent $\odot$-integral as being $\sigma$-$\odot$-finite $\sigma$-principal. 
This theorem specializes to the Shilkret (resp.\ Sugeno) integral when the binary relation $\odot$ coincides with the usual multiplication $\times$ (resp.\ the minimum $\wedge$). 
Our result does not exist in classical measure theory, at least not in such a concise and exact form. 

\begin{acknowledgements}
I am very grateful to Colas Bardavid who made very accurate suggestions. 
I also thank Marianne Akian who made some kind remarks, and Pr.\ Jimmie D.\ Lawson for his advice and comments. 
\end{acknowledgements}

\bibliographystyle{plain}

\def\cprime{$'$} \def\cprime{$'$} \def\cprime{$'$} \def\cprime{$'$}
  \def\ocirc#1{\ifmmode\setbox0=\hbox{$#1$}\dimen0=\ht0 \advance\dimen0
  by1pt\rlap{\hbox to\wd0{\hss\raise\dimen0
  \hbox{\hskip.2em$\scriptscriptstyle\circ$}\hss}}#1\else {\accent"17 #1}\fi}
  \def\ocirc#1{\ifmmode\setbox0=\hbox{$#1$}\dimen0=\ht0 \advance\dimen0
  by1pt\rlap{\hbox to\wd0{\hss\raise\dimen0
  \hbox{\hskip.2em$\scriptscriptstyle\circ$}\hss}}#1\else {\accent"17 #1}\fi}

\end{document}